\newtheorem{lemma}{Lemma}[section]
\newtheorem{theorem}[lemma]{Theorem}
\newtheorem{conjecture}[lemma]{Conjecture}
\newtheorem{corollary}[lemma]{Corollary}
\theoremstyle{define}
\newtheorem{remark}[lemma]{Remark}
\newtheorem{definition}[lemma]{Definition}
\newtheorem*{rep@theorem}{\rep@title}
\newcommand{\newreptheorem}[2]{%
\newenvironment{rep#1}[1]{%
 \def\rep@title{#2 \ref{##1}}%
 \begin{rep@theorem}}%
 {\end{rep@theorem}}}
\newcommand{\maxroot}[1]{\operatorname{maxroot} \left(#1 \right)}
\newcommand{\cauchy}[2]{\mathcal{G}_{#1} \left(#2 \right)}
\newcommand{\deriv}[1]{D}
\newcommand{\rrpos}[1]{\mathcal{P}^{+}_{#1}}
\newcommand{\rrposle}[1]{\mathcal{P}^{+}_{\leq #1}}
\renewcommand{\hat}{\widehat}
\renewcommand{\tilde}{\widetilde}
\newcommand{\phat}{\hat{p}}
\newcommand{\ptil}{\tilde{p}}
\renewcommand{\aa}{\lambda}
\newcommand{\bb}{\mu}
\newcommand{\uu}{\alpha}
\newcommand{\AND}{\quad\text{and}\quad}
\newcommand{\Utrans}[1]{U_{\uu} {#1}}
\newcommand{\Vtrans}[2]{\mathbb{V}^{#1}{#2}}
\newcommand{\Strans}[1]{\mathbb{S}#1}
\newcommand{\Htrans}[2]{\mathcal{H}^{#1}_{#2}}
\newcommand{\Wtrans}[3]{W^{#1}_{#2} {#3}}
\newcommand{\R}{\mathbb{R}}
\newcommand{\C}{\mathbb{C}}
\renewcommand{\d}[1]{\mathrm{d}{#1}}
\newcommand{\geg}[2]{C_{#1}^{(#2)}}
\newcommand{\ii}{i}
\newcommand{\ident}{\mathbb{I}}
\newcommand{\mydet}[1]{\mathrm{det}\left[ {#1} \right]}
\newcommand{\D}[1]{\Delta[#1]}
\newcommand{\Dp}{\D{p}}
\newcommand{\fit}[1]{{#1}-orthogonal-fit}
\newcommand{\myphi}[6]{\phi^{#2, #3}_{#4}(#5, #6)}
\newcommand{\mysum}[2]{\boxplus_{#1}^{#2}}
\newcommand{\jj}{j} 
\newcommand{\nn}{n} 
\newcommand{\dd}{d} 
\newcommand{\kk}{n} 
\newcommand{\one}{\mathbf{1}}
\newcommand{\arxiv}[1]{\href{https://arxiv.org/abs/#1}{\texttt{arXiv:#1}}}
\title{A rectangular additive convolution for polynomials}
\date{\today}
\author{Aurelien Gribinski\\
Princeton University
\and Adam W. Marcus\thanks{Research done under the support of NSF CAREER grant 
DMS-1552520 and a Von Neumman Fellowship at the Institute of Advanced Study, 
NSF grant DMS-1128155.}\\
Princeton University
}
\begin{document}

\maketitle

\begin{abstract}
Motivated by the study of singular values of random rectangular matrices, we define and study the rectangular additive convolution of polynomials with nonnegative real roots.
Our definition directly generalizes the asymmetric additive convolution 
introduced by Marcus, Spielman and Srivastava (2015), and our main theorem 
gives the 
corresponding generalization of the bound on the largest root from that paper.
The main tool used in the analysis is a differential operator derived from the 
``rectangular Cauchy transform'' introduced by Benaych-Georges (2009).
The proof is inductive, with the base case requiring a new nonasymptotic bound 
on the Cauchy transform of Gegenbauer polynomials which may be of independent 
interest.
\end{abstract}
\section{Introduction}\label{sec:intro}

This paper introduces the rectangular additive convolution to the theory of finite free probability.
The motivation for a finite analogue of free probability came from a series of 
works that used expected characteristic polynomials to study certain 
combinatorial problems in linear algebra \cite{convolutions4, IF3, IF4}.
It is well known that the characteristic polynomial of a Hermitian matrix $A$ 
is the real-rooted polynomial $\chi_A(x) = \mydet{x \ident- A}$, and in 
\cite{IF3} 
the authors showed that one could analyze the effect of certain random 
perturbations on $A$ by studying the effect of certain differential operators 
applied to $\chi_A(x)$.  
In particular, they showed that any valid bounds on the largest root of the 
transformed polynomials could be translated into statements about the randomly 
perturbed matrix.
In \cite{convolutions4}, the same authors showed that these differential 
operators were actually a special case of a more general class of ``polynomial 
convolutions" and they introduced a technique for deriving bounds on the 
largest root under certain convolutions.
One interesting property of the bounds that came from this technique was that 
they had quite similar form to known identities from free probability.
This idea was strengthened by the realization that the major tools used in 
proving these bounds had striking similarities to tools in free probability.

The connection was formalized in \cite{ffmain}, where it was shown that the inequalities derived for two of the convolutions studied in \cite{convolutions4} --- the symmetric additive and multiplicative convolutions --- converge to the $R$- and $S$-transform identities of free probability (respectively).
Since the release of \cite{ffmain}, a number of advances have been made in understanding the relationship between free probability and polynomial convolutions, most notably the work of Arizmendi and Perales \cite{cumulants} in developing a combinatorial framework for finite free probability using finite free cumulants (the approach in \cite{ffmain} is primarily analytic).

The purpose of this paper is to introduce a convolution on polynomials that generalizes the third 
convolution studied in \cite{convolutions4} --- what is there called the {\em asymmetric additive convolution} --- and to prove the corresponding bound on the largest root.
The method of proof will be similar to the one used in \cite{convolutions4}, however there will be a number of added complications.
Those familiar with \cite{convolutions4} may recall that all of the inequalities proved there utilized various levels of induction to reduce to a small set of ``base cases.''
One of the difficulties in dealing with the asymmetric additive convolution (as opposed to the other two convolutions) was the fact that the corresponding base case was highly nontrivial.
Rather, it required a bound on the Cauchy transform of Chebyshev polynomials 
that was both unknown at the time and not particularly easy to derive.

We will encounter the same issue: replacing the analysis on Chebyshev polynomials will be an analysis on (the more general) Gegenbauer polynomials.
To establish the bound, we will prove a number of inequalities relating nonasymptotic properties of Gegenbauer polynomials (which appear to be unknown) with the corresponding asymptotic properties (many of which are known) and these could be of interest in their own right (see Section~\ref{sec:structure} for the location of results).

Many of the ideas required in generalizing the various constructs in \cite{convolutions4} to the ones used in this paper were inspired by the work of Florent Benaych-Georges, in particular \cite{BG} where the appropriate transforms for calculating the rectangular additive convolution of two freely independent rectangular operators were introduced (hence the name of our convolution).
We remark on this connection briefly in Section~\ref{sec:BG}, but in general have written the paper in a way that assumes no previous knowledge of free probability.

\subsection{Previous work}\label{sec:previous}

The primary predecessors of this work are \cite{convolutions4}, where other polynomial convolutions were introduced, and \cite{BG}, where the free probability version of the rectangular additive convolution was introduced (see \ref{sec:BG} for more discussion on the relation to \cite{BG}).
The original purpose of \cite{convolutions4} was to develop a generic way to bound the largest root of a real-rooted polynomial when certain differential operators were applied.
Such bounds are useful in tandem with the ``method of interlacing polynomials'' first introduced in \cite{IF1}.
One of the main inequalities in \cite{convolutions4} is the following:
\begin{theorem}[Marcus, Spielman, Srivastava]\label{thm:symmetric}
Let $p$ and $q$ be real rooted polynomials of degree at most $d$.
Then 
\[
\maxroot{U_\alpha[p \mysum{\dd}{} q]} \leq \maxroot{U_\alpha p} + \maxroot{U_\alpha q} - \alpha \dd
\]
\end{theorem}
The operator $\mysum{\dd}{}$ here is what is called the {\em symmetric additive convolution} in \cite{convolutions4} and $U_\alpha$ is the differential operator $1 - \alpha \partial$.
The $U_\alpha$ acts to smoothen the roots of the polynomials, making the convolution more predictable.
Theorem~\ref{thm:symmetric} is used in \cite{IF3} to prove an asymptotically tight version of {\em restricted invertibility}, a theorem first introduced by Bourgain and Tzafriri that has seen a wide variety of uses in mathematics (see \cite{assaf}).

A considerably more difficult inequality in \cite{convolutions4} concerned what the authors called the {\em asymmetric additive convolution}.
In the notation of this paper --- see (\ref{eq:defrac}) and (\ref{eq:SandV}) --- this inequality reads
\begin{theorem}[Marcus, Spielman, Srivastava]\label{thm:asymmetric}
Let $p$ and $q$ be polynomials of degree at most $\dd$ with nonnegative real roots.
Then 
\[
\maxroot{U_\alpha \Strans{[p \mysum{\dd}{0} q]}} \leq \maxroot{U_\alpha \Strans{p}} + \maxroot{U_\alpha{\Strans{q}}} - 2\alpha \dd.
\]
for all $\alpha \leq 0$.
Furthermore, equality holds if and only if $p(x) = x^{\dd}$ or $q(x) = x^{\dd}$. 
\end{theorem}
Theorem~\ref{thm:asymmetric} was then used in \cite{IF4} to prove the existence of bipartite Ramanujan graphs of all degrees and all sizes.
The main theorem of this paper (Theorem~\ref{thm:main}) is a generalization of Theorem~\ref{thm:asymmetric}.
In particular, we show that the generalized convolution $\mysum{\dd}{\kk}$ (defined in Section~\ref{sec:convolution}) satisfies a similar inequality:
\begin{theorem}\label{thm:main}
Let $p$ and $q$ be polynomials of degree at most $\dd$ with nonnegative real roots.
Then 
\begin{equation}\label{eq:theta}
\Theta^{\nn}_{\uu}(p \mysum{\dd}{\kk} q) \leq \Theta^{\nn}_{\uu}(p) + \Theta^{\nn}_{\uu}(q) - (\kk + 2\dd)\uu
\end{equation}
for all $\uu > 0$ and $\nn \geq 0$, where
\[
\Theta^{\nn}_\uu(p) := \sqrt{\nn^2 \uu^2 + [\maxroot{\Wtrans{\nn}{\uu}{p}}]^2}.
\]
Furthermore, equality holds if and only if $p(x) = x^{\dd}$ or $q(x) = x^{\dd}$. 
\end{theorem}

One obvious difference between the two theorems is the replacement of the $U_\alpha$ operator with the more general $\Wtrans{\nn}{\uu}{}$ operator, defined in Section~\ref{sec:measure} (the differences between the two are discussed in Remark~\ref{rem:WvsU}).
Much of the added difficulty in the proof of Theorem~\ref{thm:main} (with 
respect to the proof of Theorem~\ref{thm:asymmetric}) is the quadratic nature 
of the $\Wtrans{\nn}{\uu}{}$ operator.

The other obvious difference is that the bound in \eqref{eq:theta} no longer 
relates the $\maxroot{}$ of the convolution to that of the original polynomials 
in a linear way (unless $\nn = 0$)
We have yet to come up with an intuitive explanation of why this is the correct 
form, apart from it coming up naturally in the work of Benaych-Georges (see 
Section~\ref{sec:BG}).
However, it was worth noting that the form of $\Wtrans{\nn}{\uu}{}$ (and 
$\Theta^{\nn}_{\uu}$) only plays a significant role in Section~\ref{sec:proof} 
(the base cases).  
The inductive steps in Section~\ref{sec:inductions} can be adapted to work with 
a much larger family of operators (and quantities) with the appropriate 
monotonicity properties (as was pointed out by an anonymous referee).
One corollary of the results in Section~\ref{sec:measure}, however is that, for 
fixed $\nn$, the quantity $\Theta^{\nn}_\uu(p)$ is increasing in $\uu$.
Given that $\Theta^{\nn}_0(p) = \maxroot{p}$, this will provide the type of bound we are hoping. quantitatively bound the effects of $\mysum{\dd}{\kk}$ on the largest root (with respect to the input polynomials).

In a different direction, Leake and Ryder showed that 
Theorem~\ref{thm:symmetric} was actually a special case of a more general 
submodularity inequality \cite{submodular} (note that they use the notation 
$\mysum{}{\dd}$ as opposed to the $\mysum{\dd}{}$ notation introduced in 
\cite{convolutions4}).
\begin{theorem}[Leake, Ryder]\label{thm:submodular}
For any real rooted polynomials $p, q, r$ of degree at most $d$, one has 
\[
\maxroot{[p \mysum{\dd}{} q \mysum{\dd}{} r]} + \maxroot{r} \leq \maxroot{p \mysum{\dd}{} r } + \maxroot{q \mysum{\dd}{} r }
\]
\end{theorem}
Theorem~\ref{thm:symmetric} is then the case $r(x) = x^\dd - \frac{\alpha}{\dd} x^{\dd-1}$.
The proof method of \cite{submodular} is similar to that of \cite{convolutions4} (and therefore this paper as well) however it is unclear whether it is possible to generalize any of the other convolutions in a similar way.

\subsection{Structure}\label{sec:structure}

The paper is structured as follows: 
in the remaining introductory sections, we attempt to provide some concrete motivation for studying the rectangular additive convolution. 
In Section~\ref{sec:convolution}, we define the rectangular additive convolution $\mysum{\dd}{\kk}$ and prove some of the basic properties that it satisfies.
Of particular importance in application is Theorem~\ref{thm:rr}, which shows that $\mysum{\dd}{\kk}$ preserves nonnegative rootedness.
In Section~\ref{sec:measure}, we introduce two equivalent methods --- the 
$H$-transform and $\Wtrans{\nn}{\uu}{}$ operator --- that we will use to 
measure the effect of 
$\mysum{\dd}{\kk}$ on the largest root (with respect to the input polynomials).

The proof of Theorem~\ref{thm:main} will be presented in Section~\ref{sec:proof}, however it will require a number of lemmas and simplifications that will need to be proved along the way.
Section~\ref{sec:inductions} contains two ``induction'' lemmas that will allow us to reduce the proof of Theorem~\ref{thm:main} to a subset of polynomials we call basic polynomials (see Section~\ref{sec:convolution} for definitions).
One of the main ingredients in the inductions is a ``pinching'' technique similar to the one used in \cite{convolutions4} which is introduced in Section~\ref{sec:pinch}.

Given these induction lemmas, the primary difficulty remaining will be in proving various ``base cases'' of Theorem~\ref{thm:main}.
These will be addressed in Section~\ref{sec:proof} modulo one major assumption: a bound on the Cauchy transform of a certain class of orthogonal polynomials (the {\em Gegenbauer polynomials}).
The missing bound will then be proved in Section~\ref{sec:geg}, along with a number of other nonasymptotic inequalities concerning Gegenbauer polynomials that may be of independent interest.
Section~\ref{sec:geg} has been (to a large extent) quarantined from the rest of the paper in an attempt to allow readers with a primary interest in orthogonal polynomials to find it accessible without needing to read other parts of the paper.

In Section~\ref{sec:examples}, we attempt to give some added 
understanding regarding the action of the (somewhat opaque) operator 
$\Wtrans{\nn}{\uu}{}$ through a set of informative examples.
Finally, in Section~\ref{sec:conclusion}, we end with some open problems.

\subsection{Motivation for this work}\label{sec:BG}

Before beginning, we would like to give some motivation for the study of this 
particular convolution.  
In particular, Section~\ref{sec:free_prob} and Section~\ref{sec:ffp} will 
discuss the relationship to a field first initiated by Voiculescu known as 
``free probability'' \cite{Voiculescu}.
The discussion of free probability will be restricted to these sections, and 
the remainder of the paper should be accessible to those unfamiliar with this 
area.
However, for those interested in (or at least aware of) the connections of 
polynomial convolutions to linear algebra (in particular, expected 
characteristic polynomials) may find these sections useful for understanding 
the motivations for some of what will appear.
For those interesting in learning more about this subject (and, in particular, 
the large role that combinatorics plays), we suggest 
starting with \cite{freebook}.

\subsubsection{As an operation on singular values}\label{sec:singular_values}

We start by recalling the relationship between the symmetric additive 
convolution from \cite{convolutions4} and eigenvalues of matrices.
There is a natural correspondence between any degree $\dd$ real-rooted 
polynomial $p$ and the class of $\dd \times \dd$ Hermitian matrices $H$ for  
which 
\[
p(x) = \mydet{x \ident - H}.
\]
It was shown in \cite{convolutions4} that the symmetric additive convolution of 
two degree $\dd$ real-rooted polynomials is (again) a degree $\dd$ real-rooted 
polynomial.
Hence the symmetric additive convolution can be viewed as a binary operation on 
these classes of matrices. 
This may seem coincidental, but it was shown in \cite{ffmain} that the 
symmetric additive convolution can be reproduced by the actual addition of 
matrices; that is, for all Hermitian $H$ and $K$, there exists a unitary matrix 
$U$ for 
which 
\[
\mydet{x \ident - H} \mysum{\dd}{} \mydet{x \ident - K} = \mydet{x \ident - (H 
+ U^* K U)}.
\]
Hence, for example, the symmetric additive convolution must satisfy Horn's 
inequalities \cite{tao} (precisely where in the Horn polytope the convolution 
lies is an interesting open question).

In a similar spirit, a polynomial in $\rrpos{\dd}$ can be written as $\mydet{x 
\ident - AA^*}$ for some $\dd \times (\dd + \nn)$ matrix $A$ (where we are free 
to choose $\nn \geq 0$).
In Theorem~\ref{thm:rr}, we show that the rectangular additive convolution of 
two polynomials in $\rrpos{\dd}$ is another polynomial in $\rrpos{\dd}$.
Hence, similar to above, the rectangular additive convolution can be seen as a 
binary operation on the classes of matrices with a given set of singular values.
The one small issue in that the roots of the polynomial $\mydet{x \ident - 
AA^*}$ are actually the {\em squares} of the singular values of $A$, and one 
can view the appearance of the $\Strans$ operator in the definition of 
$\Wtrans{\nn}{\uu}{}$ as a correction to this issue (see 
Remark~\ref{rem:WvsU}).

While this relationship to matrices is a primary motivation for defining (and 
studying) this particular convolution, we will see that (similar to 
\cite{convolutions4}) that it is beneficial to ignore this relationship when 
proving bounds.
In particular, one part of the induction we will use requires us to be able to 
apply the rectangular additive convolution to polynomials of different degrees, 
which would correspond to adding matrices of different sizes in the view 
discussed above. 
Hence we will refrain from using this view for the majority of this paper, 
instead working strictly in the realm of polynomials.
The two exceptions to this are Remark~\ref{rem:WvsU} and 
Section~\ref{sec:examples}, where we will use the connection to matrices to 
help explain certain aspects of our analysis that may not obvious when viewing 
the rectangular 
additive convolution completely in terms of polynomials.  
In particular, none of the main results will require any previous knowledge of 
free probability (or finite free probability).

\subsubsection{Inspiration from free probability}\label{sec:free_prob}

Much of the connection between polynomial convolutions and free probability 
stems from Remark~\ref{rem:WvsU}; in many ways, both theories consist of the 
only reasonable way to define a unitarily invariant binary operations on 
matrices.  
Hence the fact that polynomial convolutions turn into free convolutions (in the appropriate limit) is not surprising (proving that this is the case is less straight-forward \cite{ffmain}).
While the concepts in \cite{convolutions4, IF3, IF4} were developed without any knowledge of this connection, more recent work (including this paper) has benefited greatly from this relationship.

The connection is perhaps best seen by recalling that the primary tool used in \cite{convolutions4} to understand the behavior of the symmetric additive convolution was the differential transform
\[
\Utrans{p} = p - \uu p'
\]
for $\uu \geq 0$. 
This is nothing more than a more ``polynomial friendly'' version of the Cauchy transform (of the uniform distribution on the roots of $p$):
\[
\cauchy{p}{x} 
= \frac{1}{\deg(p)} \frac{p'(x)}{p(x)}
= \frac{1}{\deg(p)} \sum_{i} \frac{1}{x - \lambda_i(p)}
\]
where $\lambda_i(p)$ denote the roots of $p$.
In particular, one can check that
\[
\maxroot{\Utrans{p}} = t 
\iff
\cauchy{p}{t} = \frac{1}{\uu \deg(p)}
\]
and so many of the properties of $\Utrans{}$ can be derived directly from well known properties of the Cauchy transform.
One the other hand, $\Utrans{}$ has the advantage of remaining in the realm of real rooted polynomials (it is well known that the operator $(1 - \partial)$ preserves real rootedness, see for example \cite{BB2}) and this turns out to be useful in the analysis done in \cite{convolutions4}.

In order to understand the behavior of the rectangular convolution, we will do something similar.
Instead of relating to the Cauchy transform, however, we will use a construction inspired by the work of Benaych-Georges \cite{BG}.
The $H$-transform defined in this paper is a slight modification of what Benaych-Georges calls a {\em rectangular Cauchy transform} with much of the modification coming from the fact that the objects of interest in \cite{BG} are infinite dimensional operators, and so one must view the relationship between $\nn$ and $\dd$ as a ratio.
Some {\em a posteriori} explanations of the differences between $U_\alpha$ and $\Wtrans{\nn}{\uu}{}$ are discussed in Remark~\ref{rem:WvsU}, but this is not intended to obscure the fact that all of our {\em a priori} inspiration came directly from \cite{BG}.

\subsubsection{Finite free probability}\label{sec:ffp}

The original inspiration for \cite{convolutions4, ffmain} came from the study 
of expected characteristic polynomials of matrices.
In fact the definition of the rectangular additive convolution in 
Section~\ref{sec:convolution} was originally derived from the following 
observation \cite{dui}:

\begin{theorem}
Let $A$ and $B$ be $m \times n$ rectangular matrices, with (singular value) 
characteristic polynomials
\[
p(x) = \mydet{ x I - AA^*}
\AND
q(x) = \mydet{ x I - BB^*}
\]
Let $dQ$ and $dR$ denote Haar invariant measures over $U(m)$ and $U(n)$ (the 
spaces of unitary matrices of size $m$ and $n$, respectively).  
Then the coefficients of the polynomial 
\[
r(x) = \int_{Q, R} \mydet{ x \ident - (A + Q B R)(A + Q B R)^*} dQ dR
\]

are multilinear functions of the coefficients of $p$ and $q$.
\end{theorem}

\noindent One can check that the exact formula for $r(x)$ in terms of $p(x)$ 
and $q(x),$ derived in \cite{dui}, is given by the rectangular additive 
convolution.
In this respect, this paper can be viewed as the continuation of an 
investigation into the correspondence between expected characteristic 
polynomials and free probability that has become known as ``finite free 
probability."  

Finite free probability has played an important role in some recent results in 
computer 
science \cite{IF4, xie_xu}, and while we will not discuss any applications in 
this paper, we should mention that this paper is the first in a series of three 
papers.
The second will be an extension of \cite{IF4} (which showed the existence of bipartite Ramanujan graphs for any number of vertices $n$ and degree $d$) to the case of biregular, bipartite graphs \cite{next}.
The third will be an analogue of the analysis done in \cite{ffmain}, showing that the inequality in Theorem~\ref{thm:main} becomes an equality in the appropriate limit by showing that the individual terms converge to Benaych-Georges' rectangular $R$-transform \cite{nextnext}.
The third paper, in particular, suggests that the rectangular additive 
convolution can be viewed as a finite version of the addition of freely 
independent rectangular matrices.
In this respect, the bounds given in this paper show that the roots of $r(x)$ lie inside the support of the free convolution, a property which we will combine with the ``method of interlacing polynomials'' introduced in \cite{IF1} to build Ramanujan graphs.
%
%

\section{The Convolution}\label{sec:convolution}

\newcommand{\pol}{\operatorname{pol}}

For $\jj \geq 1$, we define $\rrpos{\jj}$ to be the collection of degree $\jj$ polynomials with real coefficients that have
\begin{enumerate}
\item All nonnegative roots
\item At least one root positive
\item The coefficient of $x^j$ positive
\end{enumerate}
and set $\rrpos{\jj}$ to be the (constant) $0$ polynomial for all $\jj \leq 0$.
Note that the second property only serves to eliminate the polynomial $x^\jj$ and that this is the only polynomial that is added by the closure:
\[
\overline{\rrpos{\jj}} = \rrpos{\jj} \cup  \{ x^\jj \}.
\]
We will use $\rrposle{\jj}$ to denote the union $\bigcup_{k=1}^\jj \rrpos{k}$.
Note that this {\em does not} include the $0$ polynomial.

We will call a polynomial $p \in \rrpos{\jj}$ {\em basic} if it has the form $p(x) = c (x - t)^\jj$ for some real numbers $c, t > 0$.
Otherwise we call it {\em nonbasic}.
Note that, trivially, every polynomial in $\rrpos{1}$ is basic.

We define a binary operation on polynomials $\mysum{\dd}{\kk}$ as follows:
\begin{definition}
For $i, j \leq \dd$ and $\kk \geq 0$, we define the {\em rectangular additive convolution} to be the linear extension of the operation
\begin{equation}\label{eq:defrac}
[x^i \mysum{\dd}{\kk} x^j](y) = 
\begin{cases}
\frac{(\kk + i)!(\kk + j)! j! i!}{(\dd+\kk)!\dd!(i + j - \dd)!(\kk + i + j - \dd)!} y^{i + j - \dd} & \text{when $i + j \geq \dd$} \\
0 & \text{otherwise}
\end{cases}
\end{equation}
In particular, if we write
\[
p(x) 
= \sum_{i} x^{\dd-i} a_i
\AND
q(x) 
= \sum_{i} x^{\dd-i} b_i
\]
then we have
\begin{equation}\label{eq:alt_def}
[p \mysum{\dd}{\kk} q](x)
= \sum_{\ell} x^{\dd-\ell} \left( \sum_{i+j=\ell}
\frac{(\dd-i)!(\dd-j)!}{\dd!(\dd-\ell)!}
\frac{(\kk+\dd-i)!(\kk+\dd-j)!}{(\kk+\dd)!(\kk+\dd-\ell)!} 
a_i b_j \right).
\end{equation}

\end{definition}

There are two special cases worth mentioning: for any polynomial $p \in \rrposle{d}$, we have
\begin{enumerate}
\item $[p \mysum{\dd}{\kk} x^\dd] = p$, and
\item $[p \mysum{\dd}{\kk}  x^{\dd-1} ] = \frac{1}{(\dd+\kk)\dd}( x p''(x) + (\kk + 1)p'(x))$.
\end{enumerate}

One property that can be derived directly from the definition is the following:
\begin{lemma}\label{lem:reduce}
Let $p \in \rrposle{\dd}$ and $q \in \rrposle{\dd-1}$.
Then 
\[
[ p \mysum{\dd}{\kk}  q ] = \frac{1}{\dd (\dd+\kk)} [ [p \mysum{\dd}{\kk}  x^{\dd-1} ] \mysum{\dd-1}{\kk}  q ]
\]
\end{lemma}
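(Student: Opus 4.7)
The plan is to exploit the bilinearity of both convolutions to reduce the claim to monomial inputs, and then check the resulting factorial identity. By construction, $(p,q) \mapsto [p \mysum{\dd}{\kk} q]$ and $(p,q) \mapsto [p \mysum{\dd-1}{\kk} q]$ are each linear in each argument, so both sides of the stated equation are bilinear in $(p, q)$. Since every $p \in \rrposle{\dd}$ (respectively $q \in \rrposle{\dd-1}$) is a linear combination of the monomials $x^0, \ldots, x^\dd$ (respectively $x^0, \ldots, x^{\dd-1}$), it suffices to verify
\[
[x^i \mysum{\dd}{\kk} x^j] = \tfrac{1}{\dd(\dd+\kk)}\bigl[[x^i \mysum{\dd}{\kk} x^{\dd-1}] \mysum{\dd-1}{\kk} x^j\bigr]
\]
for all $0 \le i \le \dd$ and $0 \le j \le \dd - 1$.

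For the inner convolution on the right-hand side, I would use the second special case noted above: since $x p''(x) + (\kk+1) p'(x) = i(\kk + i) x^{i-1}$ when $p = x^i$, we get
\[
[x^i \mysum{\dd}{\kk} x^{\dd-1}] = \tfrac{i(\kk + i)}{\dd(\dd+\kk)}\, x^{i-1},
\]
with the convention that this is $0$ when $i = 0$ (in which case the claim is trivial, since then $[x^0 \mysum{\dd}{\kk} x^j] = 0$ by the support condition $i+j \geq \dd$). The remaining step is to expand $[x^{i-1} \mysum{\dd-1}{\kk} x^j]$ using \eqref{eq:defrac} with parameters $(\dd-1, \kk)$ and compare it against $[x^i \mysum{\dd}{\kk} x^j]$ expanded via \eqref{eq:defrac} with parameters $(\dd, \kk)$. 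Both vanish unless $i + j \ge \dd$ (equivalently $(i-1) + j \ge \dd-1$), and when this holds both are monomials in $y$ of common degree $i + j - \dd$.

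The remaining identity is a bookkeeping exercise driven by repeated application of $n! = n \cdot (n-1)!$: the factor $i(\kk + i)$ promotes $(i-1)! \mapsto i!$ and $(\kk+i-1)! \mapsto (\kk+i)!$ to recover the numerator on the left-hand side, while the prefactor $\tfrac{1}{\dd(\dd+\kk)}$ promotes $(\dd-1)! \mapsto \dd!$ and $(\dd+\kk-1)! \mapsto (\dd+\kk)!$ in the denominator. I anticipate no conceptual obstacle; the only care required is matching the combinatorial support condition on both sides and keeping track of which quantities live at $\dd$ versus $\dd-1$.
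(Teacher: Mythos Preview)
Your approach is exactly the paper's: reduce to monomials by (bi)linearity, evaluate $[x^i \mysum{\dd}{\kk} x^{\dd-1}]$ explicitly, and then verify the identity by plugging both sides into \eqref{eq:defrac}. The only wrinkle is a normalization: the paper's proof records the inner convolution as $i(\kk+i)\,x^{i-1}$ rather than your (correct, and consistent with special case~(2)) value $\tfrac{i(\kk+i)}{\dd(\dd+\kk)}\,x^{i-1}$, so when you carry out the bookkeeping you will find the extra prefactor $\tfrac{1}{\dd(\dd+\kk)}$ in the lemma statement is not actually needed.
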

\begin{proof}
By linearity, it suffices to prove this when $p$ is a monomial of degree at 
most $\dd$ and $q$ a monomial of degree at most $\dd-1$.
Using (\ref{eq:defrac}) on $p(x) = x^j$, we have
\[
[ x^j \mysum{\dd}{\kk} x^{d-1} ] = 
\begin{cases}
\frac{j(\kk+j)}{\dd(\kk+\dd)} x^{j-1} & \text{for $0 < j \leq d$} 
\\
0 & \text{for $j = 0$}
\end{cases}
\]
and then the lemma follows by plugging into (\ref{eq:defrac}).
\end{proof}

\subsection{Preservation of nonnegative real roots}

The main property that we will need in this paper is the fact that the rectangular additive convolution (with the appropriate parameters) preserves the property of having all nonnegative roots.

\begin{theorem}\label{thm:rr}
Let $1 \leq i \leq j \leq \dd$.
Then for any $p \in \overline{\rrpos{i}}$ and $q \in \overline{\rrpos{j}}$, we have $[p \mysum{\dd}{\kk} q](x) \in \overline{\rrpos{i + j - \dd}}$.
\end{theorem}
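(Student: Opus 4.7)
The plan is to prove the theorem by induction on $\dd$, using Lemma \ref{lem:reduce} as the main reduction together with a preservation property for the differential operator
\[
\mathcal{T}_\kk(p)(x) := x p''(x) + (\kk+1) p'(x),
\]
which (up to the factor $\frac{1}{\dd(\dd+\kk)}$) is exactly $[p \mysum{\dd}{\kk} x^{\dd-1}]$. The key preservation lemma, which I would establish first, is that $\mathcal{T}_\kk$ carries $\overline{\rrpos{i}}$ into $\overline{\rrpos{i-1}}$ for each $i \geq 1$ (with $\overline{\rrpos{0}}$ read as the nonnegative scalars). This rests on the factorization $\mathcal{T}_\kk(p)(x) = x^{-\kk}\frac{d}{dx}\bigl(x^{\kk+1} p'(x)\bigr)$ combined with two applications of Rolle's theorem: since $p$ has all nonnegative real roots, so does $p'$; so does $x^{\kk+1} p'$ (which adds $\kk+1$ copies of the root $0$); so does its derivative; and the division by $x^\kk$ is exact because this derivative vanishes at $0$ to order at least $\kk$.

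The base case $\dd = 1$ forces $i = j = 1$, and by bilinearity reduces to the direct computation $[(x-r) \mysum{1}{\kk} (x-s)] = x - (r+s)$, whose unique root $r+s$ is nonnegative. For the inductive step, inspection of \eqref{eq:alt_def} shows that $\mysum{\dd}{\kk}$ is symmetric in its two arguments, so whenever $\min(i,j) < \dd$, after swapping $p$ and $q$ if necessary, Lemma \ref{lem:reduce} yields
\[
[p \mysum{\dd}{\kk} q] = \frac{1}{\dd^{2}(\dd+\kk)^{2}}\bigl[\mathcal{T}_\kk(p) \mysum{\dd-1}{\kk} q\bigr],
\]
and the preservation lemma together with the inductive hypothesis in dimension $\dd-1$ lands the right-hand side in $\overline{\rrpos{(i-1)+j-(\dd-1)}} = \overline{\rrpos{i+j-\dd}}$. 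Slight care is needed at the boundary $i = 1$, where $\mathcal{T}_\kk(p)$ becomes a nonnegative constant; the induction stays well-typed once $\overline{\rrpos{0}}$ is interpreted as the nonnegative scalars, and convolution of such a constant against a polynomial is straightforward from \eqref{eq:defrac}.

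The main obstacle is the remaining case $i = j = \dd$, in which Lemma \ref{lem:reduce} does not apply in either orientation. My plan here is a continuity argument along the family $q_t(x) := \prod_\ell (x - t b_\ell)$ interpolating from $q_0 = x^\dd$ (where $[p \mysum{\dd}{\kk} q_0] = p \in \overline{\rrpos{\dd}}$) to $q_1 = q$, noting that every $q_t$ lies in $\overline{\rrpos{\dd}}$. Since the coefficients, and hence the roots, of $[p \mysum{\dd}{\kk} q_t]$ depend continuously on $t$, it suffices to rule out roots leaving the nonnegative real axis as $t$ increases. I expect this to be the technically delicate step, and I would handle it by a discriminant or interlacing argument in the spirit of the Marcus--Spielman--Srivastava framework; alternatively one may invoke the interpretation of $\mysum{\dd}{\kk}$ as the expected characteristic polynomial of $(A + QBR)(A + QBR)^T$ over random orthogonal $Q, R$, where nonnegative real-rootedness is immediate from the positive semidefiniteness of every realization.
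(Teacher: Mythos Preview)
Your reduction to the case $i = j = \dd$ is essentially the paper's, and your explicit treatment of the operator $\mathcal{T}_\kk$ via the factorization $\mathcal{T}_\kk(p) = x^{-\kk}\tfrac{d}{dx}\bigl(x^{\kk+1}p'\bigr)$ together with two applications of Rolle is a clean, self-contained justification of a step the paper leaves implicit. One small index slip: after swapping so that the lower-degree polynomial sits in the second slot of Lemma~\ref{lem:reduce}, it is the \emph{higher}-degree argument that receives $\mathcal{T}_\kk$, so the displayed identity should read $\bigl[\mathcal{T}_\kk(q)\mysum{\dd-1}{\kk} p\bigr]$ rather than $\bigl[\mathcal{T}_\kk(p)\mysum{\dd-1}{\kk} q\bigr]$; the degree bookkeeping then matches.

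The genuine gap is the case $i = j = \dd$, which is the heart of the theorem. Your continuity argument along $t \mapsto q_t$ cannot be completed as stated: continuous dependence of roots on coefficients does \emph{not} by itself confine roots to the nonnegative real axis, because two real roots may collide and then separate as a complex-conjugate pair. Ruling this out is exactly the real-rootedness statement you are trying to prove, so the unspecified ``discriminant or interlacing argument'' would have to carry essentially the full content of the theorem. The alternative of invoking the random-matrix interpretation is not self-contained either: identifying $\mysum{\dd}{\kk}$ with the expected characteristic polynomial of $(A+QBR)(A+QBR)^{T}$ is itself a substantial computation, and in many developments that identification is the \emph{goal} the convolution is introduced to serve, so appealing to it here risks circularity.

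The paper handles $i=j=\dd$ by a completely different mechanism. It passes to multivariate real-stable polynomials: one observes that $y^{\kk}p(xy)$ and $y^{\kk}(xy)^{\dd}q(1/(xy))$ are real stable in $(x,y)$ by Lemma~\ref{lem:xy}, polarizes both in the $x$- and $y$-variables (Theorem~\ref{thm:polarize}) to obtain multiaffine real-stable polynomials, and then applies the Lieb--Sokal theorem (Theorem~\ref{thm:liebsokal}), which asserts that applying one multiaffine real-stable polynomial as a constant-coefficient differential operator to another preserves real stability. A direct computation with elementary symmetric functions identifies the bivariate specialization of the result with $y^{\kk}\,[p \mysum{\dd}{\kk} q](xy)$, and Lemma~\ref{lem:xy} converts real stability of that expression back into nonnegative real-rootedness of $[p \mysum{\dd}{\kk} q]$. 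This Lieb--Sokal/polarization step is precisely the missing idea in your plan.
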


The proof of this theorem will rely on a result of Lieb and Sokal \cite{LiebSokal}.
Recall that a polynomial $f$ is called {\em stable} if $f(z_1, 
\dots, z_k) \neq 0$ whenever $\Im(z_i) > 0$ for all $i$.
If (in addition) the coefficients of $f$ are real numbers, then $f$ is called {\em real stable}.
A degree $d$ univariate polynomial is stable if and only if it has $d$ real 
roots.
A multivariate polynomial is called {\em multiaffine} if it has degree at most $1$ in each of its variables.

\begin{theorem}[Lieb--Sokal]\label{thm:liebsokal}
If $f(y_1, \dots, y_n)$ and $g(x_1, \dots, x_n)$ are multiaffine real stable polynomials, then 
\[
f(-\partial_1, \dots, -\partial_n)\{  g(x_1, \dots, x_n) \} 
\]
is either the $0$ polynomial or is multiaffine real stable.
\end{theorem}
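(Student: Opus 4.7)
My plan is to proceed by induction on $\dd$, using Lemma \ref{lem:reduce} to reduce to the base case $i = j = \dd$, which is then handled by Lieb--Sokal.

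\textbf{Inductive reduction.} The definition (\ref{eq:defrac}) is symmetric in its two arguments, so $[p \mysum{\dd}{\kk} q] = [q \mysum{\dd}{\kk} p]$. If $j \le \dd - 1$, Lemma \ref{lem:reduce} combined with the identity $[p \mysum{\dd}{\kk} x^{\dd-1}] = \tfrac{1}{\dd(\dd+\kk)}(xp'' + (\kk+1)p')$ expresses $[p \mysum{\dd}{\kk} q]$ as a positive scalar multiple of $[\hat p \mysum{\dd-1}{\kk} q]$, where $\hat p := xp'' + (\kk+1)p'$. The operator $p \mapsto \hat p$ sends $\overline{\rrpos{i}}$ to $\overline{\rrpos{i-1}}$: writing $\hat p = \tfrac{d}{dx}(xp' + \kk p)$ and $xp' + \kk p = x^{1-\kk}\tfrac{d}{dx}(x^\kk p)$, Rolle's theorem applied to $x^\kk p$ (which has all nonneg real roots) shows that after cancelling the factor of $x^{\kk-1}$ from $(x^\kk p)'$ we have $xp' + \kk p \in \overline{\rrpos{i}}$, and one more derivative places $\hat p$ in $\overline{\rrpos{i-1}}$. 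The inductive hypothesis (for $\dd-1$) then gives $[\hat p \mysum{\dd-1}{\kk} q] \in \overline{\rrpos{(i-1)+j-(\dd-1)}} = \overline{\rrpos{i+j-\dd}}$. By symmetry, the case $i \le \dd-1$ reduces analogously, so the theorem is reduced to the base case $i = j = \dd$.

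\textbf{Base case $i = j = \dd$, via polarization and Lieb--Sokal.} Given $p, q \in \overline{\rrpos{\dd}}$, I polarize: let $P(x_1,\dots,x_N)$ (with $N := \dd+\kk$) be the symmetric multiaffine polarization of $x^\kk p(x)$, so $P(x,\dots,x) = x^\kk p(x)$. Since $x^\kk p(x)$ has only real nonneg roots, $P$ is real stable by the Grace--Walsh--Szeg\H{o} polarization theorem. Construct $Q(y_1,\dots,y_N)$ analogously from $q$. Then $P(\vec x)\,Q(\vec y)$ is multiaffine real stable in $2N$ variables. The central step is to exhibit an identity of the form
\[
x^c\,[p \mysum{\dd}{\kk} q](x) \;=\; F(-\partial_{\vec x}, -\partial_{\vec y})\,\bigl\{P(\vec x)\,Q(\vec y)\bigr\}\Big|_{x_1 = \dots = x_N = y_1 = \dots = y_N = x}
\]
for some $c \ge 0$ and some multiaffine real stable $F$ in the $2N$ variables. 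The $\kk$ extra variables supplied by the augmentation $p \mapsto x^\kk p$ are exactly what produces the factor $(\kk+\dd-i)!(\kk+\dd-j)!/(\kk+\dd)!(\kk+\dd-\ell)!$ appearing in (\ref{eq:alt_def}). Checking such an identity reduces by bilinearity to the monomial inputs $p = x^i$, $q = x^j$, where both sides can be computed directly from (\ref{eq:defrac}). Theorem \ref{thm:liebsokal} then yields real stability of the multivariate result, and diagonal specialization (which preserves real stability) gives real-rootedness of $[p \mysum{\dd}{\kk} q]$. Nonnegativity of the roots follows by a continuity argument: perturb the roots of $p, q$ to be strictly positive, apply the result, and pass to the limit.

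\textbf{Main obstacle.} The crucial difficulty is the base case, and within it the identification of the precise Lieb--Sokal representation: the multiaffine real stable operator $F$ and the power $c$ for which the claimed identity holds. The $\kk = 0$ case recovers the symmetric additive convolution of Marcus--Spielman--Srivastava; the rectangular generalization hinges on correctly accounting for the $\kk$-dependent factorial factors, which is why polarizing the augmented polynomial $x^\kk p$ in $\dd+\kk$ variables (rather than $p$ itself in $\dd$ variables) is the natural move, and the remaining bookkeeping—while combinatorially intricate—should then fall out automatically from differentiating elementary symmetric polynomials.
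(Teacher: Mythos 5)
Your write-up does not address the statement you were asked to prove. The statement is the Lieb--Sokal theorem itself: that for multiaffine real stable $f$ and $g$, the polynomial $f(-\partial_1,\dots,-\partial_n)\{g(x_1,\dots,x_n)\}$ is either identically zero or multiaffine real stable. What you have sketched instead is a proof of Theorem~\ref{thm:rr} (preservation of nonnegative real roots by the rectangular convolution), and at its crucial step you invoke Theorem~\ref{thm:liebsokal} as a black box --- i.e., you assume exactly the result that was to be established. Relative to the assignment this is circular, and no part of your argument (the reduction via Lemma~\ref{lem:reduce}, the polarization of $x^\kk p$, the diagonalization and continuity step) bears on why applying a stable multiaffine polynomial in the operators $-\partial_i$ to a stable multiaffine polynomial cannot create zeros in the open upper half-plane. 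For what it is worth, the paper itself does not prove this theorem either --- it is imported from Lieb and Sokal --- and the content of your sketch is essentially an outline of the paper's separate proof of Theorem~\ref{thm:rr}, with some differences in packaging (you polarize $x^\kk p$ and $q$ in $\dd+\kk$ variables each and leave the intertwining operator $F$ unspecified, whereas the paper polarizes $y^\kk p(xy)$ and $y^\kk (xy)^{\dd} q(1/(xy))$ in the $2\dd+\kk$ variables $x_1,\dots,x_\dd,y_1,\dots,y_{\dd+\kk}$ and computes the action explicitly via $\sigma_i(\partial)\{\sigma_j\}$).

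If you want to actually prove the Lieb--Sokal statement, you need a different kind of argument altogether: for instance, reduce by handling one variable at a time (writing $f = f_0 + y_1 f_1$ and $g = g_0 + x_1 g_1$ with $f_0,f_1,g_0,g_1$ independent of the first variable) and prove the single-variable contraction step directly, using Hurwitz's theorem on limits of nonvanishing analytic functions together with the elementary fact that $1 - \lambda\partial_x$ preserves stability when $\Im(\lambda)\ge 0$; alternatively, deduce it from the Borcea--Br\"{a}nd\"{e}n characterization of stability-preserving linear operators via the symbol $f(-y_1,\dots,-y_n)\,g(x_1,\dots,x_n)$... but wait, that symbol computation is precisely where the content lies, and none of it appears in your proposal. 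As written, the proposal contains no proof of the target statement.
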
 

We will also need the following well-known lemma:
\begin{lemma}\label{lem:xy}
For any degree $\jj$ univariate polynomial $p$ with positive leading coefficient, we have $p \in \overline{\rrpos{\jj}}$ if and only if $p(xy)$ is real stable.
\end{lemma}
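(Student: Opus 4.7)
The plan is to prove each implication separately, with both directions resting on one simple observation: if $x$ and $y$ both lie in the open upper half-plane, then $xy$ cannot be a nonnegative real number. To see this, I would write $x = a+bi$ and $y = c+di$ with $b,d>0$; then requiring $\Im(xy)=ad+bc=0$ forces $ad = -bc$, and substituting into $\Re(xy) = ac - bd$ yields $-b(c^2+d^2)/d < 0$. So $xy \notin [0,\infty)$.

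For the forward direction, I would factor $p(x) = c \prod_{i=1}^{\jj} (x - t_i)$ with $c > 0$ and $t_i \geq 0$ (this covers both $p \in \rrpos{\jj}$ and the boundary case $p = x^\jj$). Then $p(xy) = c\prod_i (xy - t_i)$, and by the observation above no factor $xy - t_i$ can vanish when $\Im(x), \Im(y) > 0$. Hence $p(xy)$ is nonvanishing on the upper-half polydisk, and since its coefficients are real, it is real stable.

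For the reverse direction, I would first specialize $y = 1$. The classical fact that real stability is preserved under real specializations (as long as the result is not identically zero) gives that $p(x) = p(x \cdot 1)$ is a real stable univariate polynomial, noting that it is nonzero because its leading coefficient is positive. Thus every root of $p$ is real. To exclude negative roots, suppose $t < 0$ were a root; then $x = i$ and $y = -ti$ both lie in the upper half-plane with $xy = t$, forcing $p(xy) = 0$ and contradicting real stability. Therefore every root is nonnegative real, so $p \in \overline{\rrpos{\jj}}$.

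I do not anticipate serious obstacles; the entire content is the upper half-plane computation above together with the standard closure of real stable polynomials under real specialization. If anything could be called subtle, it is only remembering to rule out the degenerate specialization at $y=1$, which is handled by the positivity of the leading coefficient.
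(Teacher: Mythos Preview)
Your argument is correct. Both directions hinge on the same elementary fact you isolate at the outset---that a product of two numbers in the open upper half-plane can never land in $[0,\infty)$---and this is exactly the content of the paper's proof as well. The forward direction is essentially identical to the paper's (you use Cartesian coordinates where the paper uses polar form, but the computation is the same).

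The reverse direction is organized a little differently. The paper argues the contrapositive in one stroke: if $p$ has a root $r\notin[0,\infty)$, then $p(x^2)$ has a non-real root, and since the coefficients are real one such root $z$ lies in the upper half-plane, so $x=y=z$ witnesses the failure of stability. You instead split into two steps: first specialize $y=1$ and invoke closure of real stability under real substitution to force all roots of $p$ to be real, then handle a hypothetical negative root $t$ with the explicit witness $x=i$, $y=-ti$. Your route imports one standard lemma (specialization) that the paper avoids, while the paper's diagonal trick $x=y$ is slightly more self-contained; on the other hand your version makes the two obstructions (non-real roots vs.\ negative roots) visibly separate. Both are equally elementary and equally short.
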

\begin{proof}
Assume $p(x y)$ is not real stable; that is, $p(x y) = 0$ for some $x, y$ with $\Im(x) > 0$ and $\Im(y) > 0$.
Without loss of generality, we can set $x = r e^{\ii \theta_1}$ and $y = R e^{\ii \theta_2}$ where $0 < \theta_1 \leq \theta_2 < \pi$.
Hence $xy = rR e^{\ii\theta_3}$ is a root of $p$, and since
\[
\theta_3 = \theta_1 + \theta_2 \in (0, 2 \pi)
\]
this root is not a nonnegative real, so $p \notin \overline{\rrpos{\jj}}$.

In the other direction, if $p \notin \overline{\rrpos{\jj}}$ then $p(x^2)$ has 
a non-real root.
Since $p$ has real coefficients, the roots come in conjugate pairs so one such root must be in the upper half plane.  
Setting $x$ and $y$ to be that root shows that $p(x y)$ is not real stable.
\end{proof}

We will write $\pol_x^{k}$ to denote the map 
\[
\pol_x^k[ x^{j} ] = \frac{\sigma_j(x_1, \dots, x_k)}{\binom{k}{j}}
\]
where $\sigma_j$ is the $j^{th}$ elementary symmetric polynomial on the inputs 
$x_1, \dots, x_k$ (and is undefined when $j > k$).
The linear extension of the map $\pol_x^i$ to polynomials is known as the {\em polarization} operator.
It should be clear that $\pol_x^i[p(x)]$ is multiaffine for any polynomial $p$ and any integer $i$.
A theorem of Borcea and Br{\"{a}}nd{\"{e}}n shows that polarization preserves the property of being real stable \cite{BB2}:

\begin{theorem}\label{thm:polarize}
If $p(x, y_1, \dots, y_n)$ is a real stable polynomial then $\pol_x^i[p]$ is real stable as well.
\end{theorem}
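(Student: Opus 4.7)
The plan is to reduce to the classical Grace--Walsh--Szegő (GWS) coincidence theorem. Two properties of the polarization operator matter: (i) $\pol_x^i[p](x_1,\dots,x_i,y_1,\dots,y_n)$ is by construction multiaffine and symmetric in $x_1,\dots,x_i$, and (ii) the specialization $x_1 = \cdots = x_i = x$ returns $p(x,y_1,\dots,y_n)$. The latter is exactly the reason for the $\binom{i}{j}$ normalization in the definition: $\sigma_j(x,\dots,x) = \binom{i}{j} x^j$, so dividing by $\binom{i}{j}$ collapses the polarized monomial back to $x^j$.

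The GWS coincidence theorem states that if $F(z_1,\dots,z_m)$ is multiaffine and symmetric and $C \subseteq \mathbb{C}$ is a circular region, then nonvanishing of the diagonal $z \mapsto F(z,\dots,z)$ on $C$ is equivalent to nonvanishing of $F$ on $C^m$. Taking $C$ to be the open upper half-plane and setting
\[
F(x_1,\dots,x_i) := \pol_x^i[p](x_1,\dots,x_i,y_1,\dots,y_n)
\]
with $y_1,\dots,y_n$ held fixed in the upper half-plane, properties (i) and (ii) place us in the GWS setup with diagonal equal to $p(x,y_1,\dots,y_n)$. Real stability of $p$ guarantees this diagonal has no zero with $\Im(x) > 0$, so GWS delivers nonvanishing of $F$ on the upper polydisk. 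Letting $y_1,\dots,y_n$ then vary over all upper-half-plane tuples yields real stability of $\pol_x^i[p]$.

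The only nontrivial ingredient is GWS itself; the rest is reading off the definition of polarization. Since the paper cites Borcea--Br\"and\'en \cite{BB2} for the whole statement, I would treat GWS as a black box rather than re-derive it. The one computational step worth checking explicitly is the normalization identity $\sigma_j(x,\dots,x) = \binom{i}{j} x^j$, which is what makes the diagonal of $\pol_x^i[p]$ coincide with $p$ and thereby enables the GWS reduction. The main conceptual obstacle is really a matter of recognizing that the polarization is characterized precisely by the two properties that GWS consumes as input.
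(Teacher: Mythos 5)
Your argument is correct, but note that the paper does not prove Theorem~\ref{thm:polarize} at all: it is quoted as a known result of Borcea and Br{\"a}nd{\'e}n with a citation to \cite{BB2}, so there is no internal proof to compare against. Your reduction to Grace--Walsh--Szeg\H{o} is in fact the standard way this polarization statement is established (it is essentially the argument in the cited literature), and the two ingredients you isolate are exactly the right ones: symmetry and multiaffinity of $\pol_x^i[p]$ in $x_1,\dots,x_i$, and the diagonalization identity $\sigma_j(x,\dots,x)=\binom{i}{j}x^j$, which forces the diagonal of $\pol_x^i[p]$ to be $p$ itself. The fix-the-$y$'s-and-apply-GWS-in-the-$x$'s step is sound because the open upper half-plane is a convex circular region, so the coincidence theorem applies with no hypothesis on the degree of the multiaffine symmetric polynomial; it is worth saying this explicitly, since the general GWS statement for non-convex circular regions needs the top coefficient to be nonzero. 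Two small points you leave implicit: the operator $\pol_x^i$ only makes sense when $\deg_x p \le i$ (which is how the paper uses it, with $i=\dd$ or $i=\dd+\kk$), and real stability rather than mere stability follows because polarization manifestly preserves realness of coefficients. Treating GWS as a black box is entirely consistent with the level of detail the paper itself adopts by citing \cite{BB2}.
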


\begin{proof}[Proof of Theorem~\ref{thm:rr}]
We first note that it suffices to consider the case when $i = j = \dd$.
To see why, we can proceed by induction on $\dd$.
Since $i = j = 1$ is the only possibility when $\dd = 1$, the base case will be covered.
Now for $\dd > 1$, if $i < \dd$ or $j < \dd$, then we can use Lemma~\ref{lem:reduce} to write the same polynomial using a convolution with parameter $\dd-1$, and this is real rooted by the induction hypothesis.

To ease notation, we will write
\[
\theta^i_x := \sigma_i(x_1, \dots, x_{\dd})
\AND
\theta^i_y := \sigma_i(y_1, \dots, y_{\kk+\dd}).
\]
and
\[
\delta^i_x := \sigma_i(\partial_{x_1}, \dots, \partial_{x_{\dd}})
\AND
\delta^i_y := \sigma_i(\partial_{y_1}, \dots, \partial_{y_{\kk+\dd}}).
\]
Given polynomials $p, q \in \overline{\rrpos{\dd}}$, with $p(x) = \sum_i x^{\dd-i}a_i$ and $q(x) = \sum_i x^{\dd-i}b_i$, set
\[
f(x_1, \dots, x_{\dd}, y_1, \dots, y_{\dd+\kk}) 
= \pol_x^{\dd} \left[ \pol_y^{\dd+\kk} \left[ y^{\kk} p(x y)\right] \right]
= \sum_i \frac{\theta^{\dd-i}_x}{\binom{\dd}{i}} \frac{\theta^{\kk+\dd-i}_y}{\binom{\kk+\dd}{i}} a_i
\]
and 
\[
g(x_1, \dots, x_{\dd}, y_1, \dots, y_{\dd+\kk}) 
= \pol_x^{\dd} \left[ \pol_y^{\dd+\kk} \left[ (x y)^{\dd} q\left(\frac{1}{x y}\right) \right] \right]
= \sum_i \frac{\theta^{i}_x}{\binom{\dd}{i}} \frac{\theta^{i}_y}{\binom{\kk+\dd}{i}} b_i
\]
Both $f$ and $g$ are multiaffine (by definition of polarization) and real stable (by a combination of Lemma~\ref{lem:xy} and Theorem~\ref{thm:polarize}).
Hence Theorem~\ref{thm:liebsokal} implies that
\begin{align*}
h(x_1,\dots, x_{\dd}, y_1, \dots, y_{\dd+\kk}) 
&= g(-\partial_{x_1}, \dots, -\partial_{x_{\dd}}, -\partial_{y_1}, \dots, -\partial_{y_{\kk+\dd}}) \{  f(x_1, \dots, x_{\dd}, y_1, \dots, y_{\dd+\kk}) \} 
\end{align*}
is either $0$ or real stable.
If it is $0$, then we are done, so assume not.
Since substitution of variables preserves real stability, the bivariate polynomial $r(x, y)$ formed by substituting $x \leftarrow x_i$ and $y \leftarrow y_i$ into $h$ is also real stable.
We claim that 
\[
r(x, y) = y^{\kk} [p \mysum{\dd}{\kk} q](x y)
\]
which, by Lemma~\ref{lem:xy}, would complete the proof.

The main observation that we will need to compute $h$ is that for any $\ell > 0$, 
\[
  \sigma_i(\partial_{z_1}, \dots, \partial_{z_\ell}) \{ \sigma_j(z_1, \dots, z_\ell) \}
 = \begin{cases}
 \binom{\ell+i-j}{i} \sigma_{j-i}(z_1, \dots, z_\ell) & \textrm{for $i \leq j$}\\
  0 & \textrm{otherwise}
\end{cases}
\]
which can easily be checked by hand.  
In particular, this gives 
\[
\delta^i_x \theta^{\dd-j}_x = \binom{i+j}{i} \theta^{\dd-i-j}_x
\AND
\delta^{i}_y \theta^{\kk+\dd-j}_y 
= \binom{i+j}{i} \theta^{\kk+\dd-i-j}_y
\]
when $i \leq j$ (and $0$ otherwise).
Applying this to our formula for $h$, we get
\begin{align*}
h(x_1,...x_{\dd},y_1,...,y_{\dd+\kk})
&= \sum_{i=0}^{\dd} b_i (-1)^{2i}\frac{\delta^i_x}{\binom{\dd}{i} } \frac{ \delta^{i}_y}{\binom{\dd+\kk}{i}} \sum_{j=0}^{\dd} a_j \frac{\theta_x^{\dd-j}}{\binom{\dd}{j} } \frac{ \theta_y^{\kk+\dd-j}}{\binom{\kk+\dd}{j}} 
\\&= \sum_{i=0}^{\dd}  \sum_{j=0}^{\dd} a_j b_i \frac{\binom{i+j}{i}\theta_x^{\dd-i-j}}{\binom{\dd}{i}\binom{\dd}{j}} 
\frac{\binom{i+j}{i}\theta_y^{\kk+\dd - i - j}}{\binom{\kk+\dd}{j}\binom{\kk+\dd}{i}} 
\end{align*}
Plugging in $x \leftarrow x_i$ and $y \leftarrow y_i$, we then get
\begin{align*}
r(x,y)
&= \sum_{\ell=0}^{\dd} \sum_{i+j=\ell}  a_j b_i 
\frac{\binom{\ell}{i}\binom{\dd}{\ell} x^{d-\ell}}{\binom{\dd}{i}\binom{\dd}{j}} 
\frac{\binom{\ell}{i}\binom{\dd+\kk}{\ell} y^{\dd+\kk - \ell}}{\binom{\dd+\kk}{j}\binom{\dd+\kk}{i}} 
\\&= y^\kk \sum_{\ell=0}^{\dd}  (x y)^{\dd-\ell} \left( \sum_{i+j=\ell} a_j b_i 
\frac{(\dd-i)!(\dd-j)!}{\dd!(\dd-\ell)!}\frac{(\kk+\dd-i)!(\kk+\dd-j)!}{(\kk+\dd)!(\kk+\dd-\ell)!}\right)
\\&= y^{\kk} [p \mysum{\dd}{\kk} q](x y).
\end{align*}
\end{proof}

\section{The measuring stick}\label{sec:measure}

In this section, we define the $\Wtrans{\nn}{\uu}{}$ operator that will be used to measure the effects of the rectangular additive convolution on the largest root of a polynomial.
Rather than define it directly, however, it will be useful to first introduce a modification of the $H$-transform from \cite{BG}, which will then have $\Wtrans{\nn}{\uu}{}$ as its corresponding differential operator.
To state the two succinctly, it will help to introduce two other operators:
\begin{equation}\label{eq:SandV}
[\Strans{p}](x) = p(x^2)
\AND
[\Vtrans{\nn}{p}](x) = x^{\nn} p(x).
\end{equation}

Given a polynomial with nonnegative roots, we define the {\em $H$-transform of $p$ (with parameter $\nn$)} as
\begin{equation}\label{def:htrans}
\Htrans{\nn}{p}(x)
= [\log \Strans{p}]' [\log \Strans{\Vtrans{\nn}{p}}]'
= \frac{[\Strans{p}]'}{\Strans{p}} \left( \frac{2\nn}{x} + \frac{[\Strans{p}]'}{\Strans{p}} \right)
\end{equation}
and the corresponding differential operator
\begin{equation}\label{def:wtrans}
\Wtrans{\nn}{\uu}{p}
= [\Strans{p}][\Strans{\Vtrans{\nn}{p}}] - \uu^2  [\Strans{p}]'[\Strans{\Vtrans{\nn}{p}}]'
.
\end{equation}
Note that the parameter $\nn$ is intended to be the same as the parameter $\kk$ in (\ref{eq:defrac}).
One could consider $\Htrans{k}{p}$ and $\Wtrans{k}{\uu}{p}$ for general value of $k$, but we will only be using these transforms to directly measure the effect of $\mysum{\dd}{\kk}$ and so there is no reason to consider this more general case.

\begin{remark}\label{rem:WvsU}
For those familiar with the $\Utrans{}$ operator from \cite{convolutions4}, we 
would like to point out that there is a natural way to understand the 
differences between that operator and the $\Wtrans{\nn}{\uu}{}$ defined here  
using the relationship to characteristic polynomials 
mentioned in Section~\ref{sec:singular_values}.
As discussed there, the symmetric additive convolution can be viewed as a
binary operation on eigenvalues (technically, classes of Hermitian matrices 
with the same eigenvalues) and, in a similar spirit, the rectangular additive 
convolution can be viewed as a binary operation on singular values 
(technically, classes of matrices with the same dimensions and singular values).
This correspondence goes via the characteristic polynomials
$
\mydet{x \ident - H}
$
with $H$ Hermitian (in the eigenvalue case) and 
$
\mydet{x \ident - AA^*}
$
with $A$ a $\dd \times (\nn + \dd)$ matrix with $n \geq 0$ (in the singular 
value case).
The second correspondence is not perfect, however, in that the roots of the 
polynomial $\mydet{x \ident - AA^*}$ are actually the {\em squares} of the 
singular values of $A$.

The first main difference between the $\Utrans{}$ and $\Wtrans{\nn}{\uu}{}$ 
operators --- the appearance of the $\Strans$ operator --- can be viewed 
as a necessary compensation for this issue.
One concern one might have is that this ``correction'' effectively creates two copies of each singular value (a positive one and a negative one).
Fortunately, our primary interest is in understanding the largest singular value (that is, the largest root of $\Strans p$), and so the addition of extra negative roots will prove to be irrelevant in our analysis.

The remaining differences between the $\Utrans{}$ and $\Wtrans{\nn}{\uu}{}$ 
operators --- the quadratic nature and appearance of the $\Vtrans{\nn}{}$ 
operator (and the parameter $\nn$ in general) --- can likewise be seen as a 
necessary compensation for a different issue.
Given the polynomial $\mydet{x \ident - AA^*}$, there is no way to know the 
number of columns in the underlying matrix $A$.  
One might hope that this is irrelevant, but the analysis in \cite{BG} shows 
that this is not the case.
One obvious attempt to correct this would be to instead consider the 
polynomial $\mydet{x \ident - A^* A}$, but one runs into the same problem 
concerning the number of rows (and therefore the value of $\dd$).
The $\Wtrans{\nn}{\uu}{}$ operator compensates for this by using {\em both} 
polynomials
\[
p(x) = \mydet{x\ident - A A^*}
\AND
\Vtrans{\nn}{p}(x) = \mydet{x\ident - A^* A}
\]
to be used.
The one case where the $\Wtrans{\nn}{\uu}{}$ operator would not need 
both polynomials is of course when $\nn 
= 0$.
In this case, $\maxroot{\Wtrans{0}{\uu}{p}}$ becomes a difference of squares (which can then be factored):
\begin{align*}
\maxroot{\Wtrans{0}{\uu}{p}} 
= \maxroot{ p(x^2)^2 - 4 x^2 \uu^2 p'(x^2)^2 } 
&= \maxroot{ p(x^2) - 2 \uu x p'(x^2) } 
\\&= \maxroot{ U_{\alpha} \Strans{p} }.
\end{align*}
This is the reason that the analysis of the asymmetric convolution in 
\cite{convolutions4} could be done using the simpler $U_\alpha$ operator 
(despite being a special case of the results in this paper).
\end{remark}

As in \cite{convolutions4}, we would like to be able to associate the function $\Htrans{\nn}{p}$ with the largest root of the polynomial $\Wtrans{\nn}{\uu}{p}$ but for this to be well-defined, we first need to show that $\Wtrans{\nn}{\uu}{p}$ has real roots.
Our proof will use a classical result in real rooted polynomials \cite{BB2}:
\begin{theorem}[Hermite--Biehler]
Let $f$ and $g$ be polynomials with real coefficients.
Then the following are equivalent:
\begin{enumerate}
\item Every root $\lambda$ of $f + \ii g$ satisfies $\Im\{\lambda \} \leq 0$ (here $\Im$ denotes the imaginary part)
\item $f$ and $g$ are real rooted and the polynomial $f g' - g f' \leq 0$ at any point $x \in \R$.
\end{enumerate}
\end{theorem}

Using this, it is easy to show the following lemma, which will immediately imply what we need:
\begin{lemma}\label{lem:rr}
Let $p$ and $q$ be real rooted polynomials.
Then 
\[
p(x) q(x) - p'(x) q'(x)
\]
is real rooted.
\end{lemma}
\begin{proof}
It is an easy consequence of Rolle's theorem that $p'$ is real rooted whenever $p$ is, and then one can check that 
\[
p(x) p''(x) - p'(x)^2 \leq 0
\]
for all $x$ by noticing
\[
\frac{p(x) p''(x) - {p'(x)}^2}{p(x)^2} 
= ( \log p(x) )'' 
=  \sum_i -\frac{1}{(x-\lambda_i(p))^2}.
\]
Hermite--Biehler then implies that the polynomials 
\[
p(x) + \ii p'(x) 
\AND
q(x) + \ii q'(x) 
\]
have no roots in the upper half plane.
Hence their product
\[
pq + \ii (p'q + q'p) - p'q'
\]
has no roots in the upper half plane as well.
So Hermite--Biehler (in the opposite direction) gives that $pq - p'q'$ is real rooted.
\end{proof}

We therefore have the following correspondence, which amounts to nothing more than algebraic manipulation:
\begin{corollary}\label{lem:translate}
\[
\Htrans{\nn}{p}(t) = \frac{1}{\uu^2}
\iff
\maxroot{\Wtrans{\nn}{\uu}{p}} = t
\]
\end{corollary}

\subsection{Monotonicity Properties}\label{sec:monotonicity}

One of the most important properties of the function $\cauchy{p}{x}$ when $p$ is real rooted is that it is strictly monotone decreasing on the interval $[\maxroot{p}, \infty)$ (something that can be seen directly from the definition).
Our new functions will inherit that property:

\begin{lemma}\label{lem:Hmonotone}
For all polynomials $p$ with nonnegative roots, the function $\Htrans{\nn}{p}(x)$ is strictly decreasing at any $x > \sqrt{\maxroot{p}}$.
\end{lemma} 
\begin{proof}
Let $t = \maxroot{p}$.
Since $p$ has nonnegative roots, both $\Strans{p}$ and $\Strans{\Vtrans{\nn}{p}}$ are real rooted with 
\[
\maxroot{\Strans{p}} = \maxroot{\Strans{\Vtrans{\nn}{p}}} = \sqrt{t}.
\]
Hence both $\cauchy{\Strans{p}}{x}$ and $\cauchy{\Strans{\Vtrans{\nn}{q}}}{x}$ are strictly decreasing for $x \geq \sqrt{t}$ and so the product is strictly decreasing as well.
\end{proof}

This then implies two ``inverse'' properties for the differential operators:

\begin{corollary}  \label{lem:monotU}
Let $p$ be a polynomial with nonnegative roots.
Then for any $\uu > 0$, the quantity
\[
r(\uu) := \maxroot{\Wtrans{\nn}{\uu}{p}}
\]
is strictly increasing.
\end{corollary}
\begin{proof}
By Corollary~\ref{lem:translate}, we have
\[
\Htrans{\nn}{p}(r(\uu)) = \frac{1}{\uu^2}.
\]
Taking derivatives (in $\uu$) on both sides gives
\[
[\Htrans{\nn}{p}]'(r(\uu)) r'(\uu) = \frac{-1}{\uu^3} < 0
\]
where we know from Lemma~\ref{lem:Hmonotone}  that $[\Htrans{\nn}{p}]'(r(\uu)) < 0$.
Hence it must be that $r'(\uu) > 0$.
\end{proof}

The one disadvantage of these new differential operators is their quadratic nature.
In the case where we will only need qualitative information about two polynomials, we will be able to reduce to the simpler case:

 \begin{lemma}\label{lem:simplify}
Let $p, q$ be polynomials with nonnegative roots.
Then for any $x > \sqrt{\maxroot{pq}}$, we have
\[
\Htrans{\nn}{p}(x) \leq \Htrans{\nn}{q}(x) 
\iff 
[ \log p ]'(x^2) \leq [ \log q ]'(x^2).
\]
\end{lemma}
\begin{proof}
Let 
\begin{equation}\label{wp}
w_p = [\log \Strans{p}]'(x) = 2 x \frac{p'(x^2)}{p(x^2)} 
\end{equation}
and similarly for $w_q$.
By definition, we therefore have
\[
\Htrans{\nn}{p}(x) = \frac{n}{x} w_p + w_p^2
\AND
\Htrans{\nn}{q}(x) = \frac{n}{x} w_q + w_q^2
\]
and so the difference is
\[
\Htrans{\nn}{p}(x) - \Htrans{\nn}{q}(x) = 
(w_p - w_q) \left( \frac{n}{x} + w_p + w_q \right)
\]
where for $x > \maxroot{p q}$ we have $\frac{n}{x}, w_p, w_q > 0$.
Hence 
\[
\Htrans{\nn}{p}(x) - \Htrans{\nn}{q}(x) 
\AND
w_p - w_q
\]
have the same sign.
But by (\ref{wp}), we can write
\[
 w_p - w_q = 2 x \frac{p'(x^2)}{p(x^2)} - 2 x \frac{q'(x^2)}{q(x^2)}
\]
and since $x > 0$, the lemma follows.
\end{proof}

Using Lemma~\ref{lem:simplify}, the primary inequality that we will need becomes a simple calculation.

\begin{lemma}\label{lem:quasilinear}
Let $p, q, r$ be polynomials with nonnegative roots and positive leading coefficients such that $p + q = r$.
Furthermore, assume each has at least one positive root.
Then for all $n \geq 0$ and all $\uu > 0$, we have
\[
\min \left\{ \Htrans{\nn}{p}(t), \Htrans{\nn}{q}(t) \right \}
\leq
\Htrans{\nn}{r}(t)
\leq 
\max \left\{ \Htrans{\nn}{p}(t), \Htrans{\nn}{q}(t) \right \}
\]
whenever $t > \sqrt{\maxroot{p q r}}$.
Furthermore, equality holds in one direction if and only if it holds in the other direction.
\end{lemma}
\begin{proof}
By Lemma~\ref{lem:simplify}, it suffices to show that 
\[
\min \left\{ [ \log p ]'(s) , [ \log q ]'(s) \right \}
\leq
[ \log r ]'(s)
\leq
\max \left\{ [ \log p ]'(s) , [ \log q ]'(s) \right \}
\]
for any $s$ which is greater than $\maxroot{p q r}$.
Now notice that
\[
\frac{r'}{r} 
= \frac{p' + q'}{r} 
= \frac{p}{r}\frac{p'}{p} + \frac{q}{r} \frac{q'}{q} 
\]
where $p / r$ and $q / r$ are both positive at $s$ (since $s$ is larger than the maximum root of each polynomial, and each polynomial has a positive leading coefficient).
So setting $\lambda = p(s)/r(s)$, we have
\[
[ \log r ]'(s) = \lambda [ \log p ]'(s) + (1- \lambda)[ \log q ]'(s)
\]
for some $0 < \lambda < 1$.
The lemma then follows.
\end{proof}
 
Of particular importance will be the case when $t = \maxroot{\Wtrans{\nn}{\uu}{p}}$.

\begin{corollary}\label{cor:straddle_roots}
Let $p, q, r$ satisfy the conditions of Lemma~\ref{lem:quasilinear} with  
\[
\beta_p = \maxroot{\Wtrans{\nn}{\uu}{p}}
\AND
\beta_q = \maxroot{\Wtrans{\nn}{\uu}{q}}
\AND
\beta_r = \maxroot{\Wtrans{\nn}{\uu}{r}}.
\]
Assume further that $\beta_r \geq \sqrt{\maxroot{p q r}}$.
Then 
\[
\min \left\{ \beta_p, \beta_q \right \}
\leq
\beta_r
\leq 
\max \left\{ \beta_p, \beta_q \right \}
\]
with two of the $\beta_k$ equal if and only if all three of the $\beta_k$ are equal.
\end{corollary}
\begin{proof}
Set $t = \beta_r$ in Lemma~\ref{lem:quasilinear} and assume (without loss of generality) that
\begin{equation}\label{eq:compare}
\Htrans{\nn}{p}(\beta_r) \leq \Htrans{\nn}{r}(\beta_r) \leq \Htrans{\nn}{q}(\beta_r)
\end{equation}
By Lemma~\ref{lem:translate}, we have 
\[
\Htrans{\nn}{p}(\beta_p) = \Htrans{\nn}{q}(\beta_q) = \Htrans{\nn}{q}(\beta_r) = \frac{1}{\uu^2}
\]
Hence we can rewrite (\ref{eq:compare}) as 
\[
\Htrans{\nn}{p}(\beta_r) \leq \Htrans{\nn}{p}(\beta_p)
\AND
\Htrans{\nn}{q}(\beta_q) \leq \Htrans{\nn}{q}(\beta_r)
\]
which, by Lemma~\ref{lem:Hmonotone} implies that 
\[
\beta_q \leq \beta_r \leq \beta_p 
\]
which satisfies the desired conclusion.
\end{proof}

\section{Inductions}\label{sec:inductions}

The goal of this section is to prove two ``induction steps'' that will be useful in the proof of Theorem~\ref{thm:main}.
Let
\begin{equation}\label{def:myphi}
\myphi{\nn}{\kk}{\dd}{\uu}{p}{q} := \Theta^{n}_{\uu}(p) + \Theta^{n}_{\uu}(q) - \Theta^{n}_{\uu}(p \mysum{\dd}{\kk} q) - (\kk + 2\dd)\uu
\end{equation}
where 
\[
\Theta^{\nn}_\uu(p) := \sqrt{\nn^2 \uu^2 + [\maxroot{\Wtrans{\nn}{\uu}{p}}]^2}.
\]
Notice that Theorem~\ref{thm:main} can be restated as saying that $\myphi{\nn}{\kk}{\dd}{\uu}{p}{q} > 0$.
Our first induction will be on the degree $\dd$.

\begin{lemma}\label{lem:inductiond}
Let $2 \leq \jj < \dd$.
Assume that, for some fixed $q \in \rrposle{\dd-1}$ and $\alpha > 0$, we have
\begin{enumerate}
\item[(i)] $\myphi{\nn}{\kk}{\dd-1}{\uu}{p}{q} > 0$ for 
all $p \in \rrpos{\jj-1}$
\item[(ii)] $\myphi{\nn}{\kk}{\dd}{\uu}{p}{x^{\dd-1}} > 0$ for 
all $p \in \rrpos{\jj}$
\end{enumerate}
Then $\myphi{\nn}{\kk}{\dd}{\uu}{p}{q} > 0$ for 
all $p \in \rrpos{j}$.
\end{lemma}

Our second induction will be on the number of distinct roots in the 
polynomial.


\begin{lemma}\label{lem:induction}
Let $2 \leq \jj \leq \dd$.
Assume that, for a fixed $q \in \rrposle{\dd}$ and $\uu > 0$, we have
\begin{enumerate}
\item[(i)] $\myphi{\nn}{\kk}{\dd}{\uu}{p}{q} > 0$ for 
all polynomials $p \in \rrpos{\jj-1}$
\item[(ii)] $\myphi{\nn}{\kk}{\dd}{\uu}{p}{q} > 0$ for 
all basic polynomials $p \in \rrpos{\jj}$
\end{enumerate}
Then $\myphi{\nn}{\kk}{\dd}{\uu}{p}{q} > 0$ for 
all polynomials $p \in \rrpos{\jj}$.
\end{lemma}

The proofs of both lemmas are given in Section~\ref{sec:proofinduct}.
The proof of Lemma~\ref{lem:induction} will utilize a decomposition of a nonbasic polynomial $p \in \rrpos{j}$ into two ``simpler'' polynomials that is proved in Section~\ref{sec:pinch}.
By ``simpler'', we mean that one of the polynomial will have lower degree 
(allowing for induction) and the other polynomial will have two of its roots 
moved closer together (an operation that is referred to in \cite{convolutions4} 
as ``pinching'').
Obviously one can always pinch a nonbasic polynomial --- the goal will be to find a pinch that decreases $\myphi{\nn}{\kk}{\dd}{\uu}{\cdot}{q}$, as this would imply that any minimum of $\myphi{\nn}{\kk}{\dd}{\uu}{\cdot}{q}$ (if it exists) must be a basic polynomial.

Because basic polynomials feature prominently in the computations moving forward, it will be useful to have calculated the following quantity beforehand:

\begin{lemma}\label{lem:basictheta}
Let $p(x) = c (x - \lambda)^j$ with $j \geq 1$ and $c, \lambda > 0$.
Then
\[
\Theta^n_\uu(p) = \sqrt{(n+j)^2 \uu^2 + \lambda} + \uu j.
\]
\end{lemma}
\begin{proof}
For $p$ as given, we have
\[
\Wtrans{n}{\uu}{p}(x)
=c (x^2-\lambda)^{2(j-1)}x^{2n} r_{\lambda}(x^2)
\]
where $r_{\lambda}(x)= (x - \lambda)^2 - 4 \jj(\nn+\jj) \uu^2 (x - \lambda) - 4 \jj^2\uu^2 \lambda$.
Hence
\begin{equation}\label{eq:basicmr}
\maxroot{\Wtrans{n}{\uu}{p}}^2 = \maxroot{r_\lambda} = \lambda + 2(n+j)j\uu^2 +2j \uu\sqrt{(n+j)^2 \uu^2+\lambda}
\end{equation}
and so 
\begin{align*}
\Theta^n_\uu(p)^2 
= n^2 \uu^2 +  \lambda + 2(n+j)j\uu^2 +2j \uu\sqrt{(n+j)^2 \uu^2+\lambda}
= \left( \sqrt{(n+j)^2 \uu^2 + \lambda} + \uu j \right)^2.
\end{align*}
\end{proof}

\subsection{The Pinch}\label{sec:pinch}

\newcommand{\Htransbar}[2]{\hat{\mathcal{H}}^{#1}_{#2}}
\newcommand{\Wtransbar}[3]{\hat{W}^{#1}_{#2}{#3}}

\newcommand{\uuu}{\zeta}

The main goal of this subsection is to prove Lemma~\ref{lem:recPinch}, which provides the existence of a pinch with the properties that will be useful in the proof of Lemma~\ref{lem:induction}.

\begin{lemma}\label{lem:recPinch}
Fix $\uu > 0$, $j \geq 2$, and let $p(x) \in \rrpos{j}$ be nonbasic.
Then there exist $\ptil \in \rrpos{j}$ and $\phat \in \rrpos{j-1}$
  so that 
\begin{enumerate}
\item[a.] $p (x) = \phat (x) + \ptil (x)$,
\item[b.] $\maxroot{\ptil} \leq \maxroot{p}$,
\item[c.] \label{eqn:recPinch} 
$
\maxroot{\Wtrans{n}{\uu}{\ptil} }
 = \maxroot{\Wtrans{n}{\uu} {\phat} }
 = \maxroot{\Wtrans{n}{\uu}{p} }
$
\item[d.] $\maxroot{\phat} \leq \maxroot{\Wtrans{n}{\uu}{p} }$.
\end{enumerate}
\end{lemma}

The proof of Lemma~\ref{lem:recPinch} will occur in two steps.
First we will prove a pinching lemma for a ``linearized'' version of the $W$-transform (by removing the $\Strans$ operator), and then we will show how the existence of a linearized pinch implies Lemma~\ref{lem:recPinch}.
Note that this is the only section where these linearized operators will appear.

We define the ``linearized'' versions of the $H$-transform as:
\[
\Htransbar{n}{p}:= [\log p]' \log[ \Vtrans{n}{p} ]' = [\log p]' \left( \frac{n}{x} + [\log p]' \right)
\]
and note that for two polynomials $p, q$, $[\log p]'(t) = [\log q]'(t)$ implies $\Htransbar{n}{p}(t) = \Htransbar{n}{q}(t)$.
Corresponding to this is the ``linearized'' version of the $W$-transform
\[
\Wtransbar{n}{\uuu}{p}(x) = [p][\Vtrans{n}{p}] - \uuu^2 [p]'[\Vtrans{n}{p}]'
\]
which by Lemma~\ref{lem:rr} is real-rooted as long as $p$ is real rooted.
Hence we can say 
\begin{equation}\label{lineartranslate}
\Htransbar{n}{p}(t) = \frac{1}{\uuu^2} \iff \maxroot{\Wtransbar{n}{\uuu}{p}} = t.
\end{equation}

\begin{lemma}[Linearized pinching]\label{lem:pinch}
Let $\uuu > 0$, $j \geq 2$, and let $p(x) = (x - a)(x - b) r(x) \in \rrpos{j}$ be a monic polynomial with $0 \leq a < b = \maxroot{p}$.
Then there exist real numbers $\mu \in (a, b)$ and $\rho > b$ so that the polynomials 
\[
  \ptil(x) = (x-\mu)^{2}r(x)
\AND
  \phat (x) = \kappa (x-\rho )r(x)
\]
satisfy
\begin{itemize}
\item [a.] $\ptil \in \rrpos{j}$ and $\phat \in \rrpos{j-1}$ (in particular, $\kappa > 0$)
\item [b.] $p = \ptil + \phat$
\item [c.] $\maxroot{\Wtransbar{n}{\uuu}{p}} = \maxroot{\Wtransbar{n}{\uuu}{\ptil}}=\maxroot{\Wtransbar{n}{\uuu}{\phat}}$
\item [d.] $\rho < \maxroot{\Wtransbar {n}{\uuu}{p}}$
\end{itemize}
\end{lemma}
\begin{proof}
Let $t = \maxroot{\Wtransbar{n}{\uuu}{p}}$.
Since $\uuu > 0$, we have by Lemma~\ref{lem:monotU} that $t > b > a$, so there is no issue in setting
\[
  \mu = t - \frac{2}{1/(t - a) + 1/(t - b)}.
\]
or (rearranging slightly)
\begin{equation}\label{harmonic}
  \frac{2}{t - \mu} = \frac{1}{t - a} + \frac{1}{t - b}.
\end{equation}
Note that (\ref{harmonic}) gives $(t - \mu)$ as the harmonic average of $(t - a)$  and $(t - b)$ (where $a \neq b$).
Using the fact that $t > b > a$, this implies that $(t - b) < (t - \mu) < (t - a)$ or (equivalently) $a < \mu < b$.
Furthermore, 
\[
[\log (x-c)(x-d)r(x) ]'(t) = 
\frac{r'(t)}{r(t)} + \frac{1}{t - c} + \frac{1}{t - d}
\]
so (\ref{harmonic}) also shows that $[\log p]'(t) = [\log \ptil]'(t)$.

Now define $\phat = p - \ptil$.
Since $p$ and $\ptil$ have the same leading coefficient, it should be clear that $\phat$ has degree $j-1$.
Furthermore, we can now write
\[
[\log p]'
= \frac{p'}{p} 
= \frac{\phat'}{p} + \frac{\ptil'}{p}
= \frac{\phat}{p} [\log \phat]' + \frac{\ptil}{p} [\log \ptil]'.
\]
Evaluating at the point $t$, we get that
\[
[\log p]'(t) = \lambda [\log \phat]'(t) + (1 - \lambda) [\log \ptil]'(t)
\]
for some $\lambda \in (0, 1)$, so since $[\log p]'(t) = [\log \ptil]'(t)$ we must have
\[
[\log p]'(t) 
= [\log \ptil]'(t) 
= [\log \phat]'(t).
\]
As is mentioned above, this implies 
\[
\Htransbar{n}{p}(t) = \Htransbar{n}{\ptil}(t) = \Htransbar{n}{\phat}(t)
\]
and so by (\ref{lineartranslate})
\[
\maxroot{\Wtransbar{n}{\uuu}{p}} 
= \maxroot{\Wtransbar{n}{\uuu}{\ptil}}
=\maxroot{\Wtransbar{n}{\uuu}{\phat}}.
\]

The fact that $a < \mu < b$ shows that $\ptil \in \rrpos{j}$, and so it remains to show that $\kappa > 0$ and that $\rho \in (b, t)$.
We first multiply out the equation $p = \ptil + \phat$ and equate coefficients to get
\begin{equation}\label{eq:coeff}
\kappa = 2\mu - a - b 
\AND
\rho = \frac{\mu^2 - a b}{2 \mu - a- b}.
\end{equation}
Now to see that $\kappa > 0$, recall that (\ref{harmonic}) expresses $t - \mu$ as the harmonic average of two postive numbers.
The inequality between the arithmetic and harmonic means therefore implies
\[
t - \mu < \frac{1}{2} (2 t - a - b)
\]
which, after rearranging, gives that $2 \mu > a + b$.

To see that $\rho \in (b, t)$, we can solve for $t$ in (\ref{harmonic}) to get
\[
t = \frac{\mu(a+b) - 2 a b}{2 \mu - a - b}
\]
and then use the formula for $\rho$ in (\ref{eq:coeff}) to compute
\[
\rho - b 
= \frac{(\mu-b)^2}{2 \mu - a- b} > 0
\AND
t - \rho = \frac{(b - t)(t - a)}{2 \mu - a - b} > 0
\]
as needed.
\end{proof}

To move from the ``linearized'' pinch to the ``quadratic'' pinch that we need, we will use the following observation:

\begin{lemma}\label{lem:quad}
Fix $\uu > 0$, $j \geq 2$, and let $p \in \rrpos{j}$.
Then 
\[
\maxroot{ \Wtransbar{n}{\uu}{p} } = t \iff \maxroot{\Wtrans{n}{2\uu t}{p} } = t^2
\]
\end{lemma}
\begin{proof}
By definition, we have
\begin{align*}
\maxroot{ \Wtransbar{n}{\uu}{p} } = t
&~\Longrightarrow~ \Wtrans{n}{\uu}{p}(t) = 0 
\\ &\iff [\Strans{q}](t)[\Strans{\Vtrans{n}{p}}](t) - \uu^2 [\Strans{p}]'(t)[\Strans{\Vtrans{n}{p}}]'(t) = 0
\\ &\iff p(t^2)[\Vtrans{n}{p}](t^2)- 4 t^2 \uu^2 [\Vtrans{n}{p})]'(t^2) p'(t^2) = 0
\end{align*}
and so $t^2$ is {\em some} root of $\maxroot{\Wtrans{n}{2\uu t}{p} }$; we wish to show it is the largest one.
Assume (for contradiction) that there exists some $\omega > t^2$ for which $\Wtransbar{n}{2\uu t} {p}(\omega) = 0$.
This would imply that
\[
 \Htransbar{n}{p}(\omega)
= \Htransbar{n}{p}(t^2)
= \frac{1}{4\uu^2t^2}.
\]
However this is impossible, since $\Htransbar{n}{p}(x)$ is strictly decreasing whenever $x > \maxroot{p}$ and we have assumed that $\omega > t^2 > \maxroot{p}$.
The reverse direction (once we have fixed the value of $t$) is essentially the same.
\end{proof}

Lemma~\ref{lem:recPinch} then follows easily:
\begin{proof}[Proof of Lemma~\ref{lem:recPinch}]
Apply Lemma~\ref{lem:pinch} with $\uuu = 2 \uu t$ to get polynomials $\ptil$ and $\phat$.
Using Lemma~\ref{lem:quad}, the properties provided by Lemma~\ref{lem:pinch} then translate directly into the properties needed.
\end{proof}

\subsection{The Lemmas}\label{sec:proofinduct}

We now give proofs of the two main lemmas stated at the beginning of the section (the lemmas are restated here for convenience).

\begin{replemma}{lem:inductiond}
Let $2 \leq \jj < \dd$.
Assume that, for some fixed $q \in \rrposle{\dd-1}$ and $\uu > 0$, we have
\begin{enumerate}
\item[(i)] $\myphi{\nn}{\kk}{\dd-1}{\uu}{p}{q} > 0$ for 
all $p \in \rrpos{\jj-1}$
\item[(ii)] $\myphi{\nn}{\kk}{\dd}{\uu}{p}{x^{\dd-1}} > 0$ for 
all $p \in \rrpos{\jj}$
\end{enumerate}
Then $\myphi{\nn}{\kk}{\dd}{\uu}{p}{q} > 0$ for 
all $p \in \rrpos{j}$.
\end{replemma}
\begin{proof}
Since both $p$ and $q$ have degree less than $\dd$, we can apply Lemma~\ref{lem:reduce} to get
\[
[ p \mysum{\dd}{\kk}  q ] = \frac{1}{\dd (\dd + \kk)} [ \Dp \mysum{\dd-1}{\kk}  q ]
\]
where $\Dp = [p \mysum{\dd}{\kk}  x^{\dd-1} ] \in \rrpos{\jj-1}$.
In particular, 
\[
\Theta^{\nn}_\uu([ p \mysum{\dd}{\kk}  q ]) 
= \Theta^{\nn}_\uu([ \Dp \mysum{\dd-1}{\kk}  q ])
\]
where by $(i)$ we have
\[
\Theta^{\nn}_\uu([ \Dp \mysum{\dd-1}{\kk}  q ]) < \Theta^\nn_\uu(\Dp) + \Theta^\nn_\uu(q) - (\nn + 2(\dd-1))\uu
\]
and by $(ii)$ we have
\[
\Theta^\nn_\uu(\Dp) < \Theta^\nn_\uu(p) + \Theta^\nn_\uu(x^{\dd-1}) - (\nn + 2 \dd) \uu.
\]
where $\Theta^\nn_\uu(x^{\dd-1}) = (\nn + 2 (\dd-1)) \uu$ by Lemma~\ref{lem:basictheta}.
Combining these gives
\[
\Theta^{\nn}_\uu([ p \mysum{\dd}{\kk}  q ]) < \Theta^\nn_\uu(p) + \Theta^\nn_\uu(q) - (\nn + 2\dd)\uu
\]
as required.
\end{proof}

\begin{replemma}{lem:induction}
Let $2 \leq \jj \leq \dd$.
Assume that, for a fixed $q \in \rrposle{\dd}$ and $\uu > 0$, we have
\begin{enumerate}
\item[(i)] $\myphi{\nn}{\kk}{\dd}{\uu}{p}{q} > 0$ for 
all polynomials $p \in \rrpos{\jj-1}$
\item[(ii)] $\myphi{\nn}{\kk}{\dd}{\uu}{p}{q} > 0$ for 
all basic polynomials $p \in \rrpos{\jj}$
\end{enumerate}
Then $\myphi{\nn}{\kk}{\dd}{\uu}{p}{q} > 0$ for 
all polynomials $p \in \rrpos{\jj}$.
\end{replemma}
\begin{proof}
Fix $\uu$ and $q$ and assume (for contradiction) that there exists a polynomial $p \in \rrpos{\jj}$ for which $\myphi{\nn}{\kk}{\dd}{\uu}{p}{q} \leq 0$.
Since $p$ has a finite number of roots, we can find a constant $R$ for which all of the roots of $p$ lie in the interval $[0, R]$, and then consider the collection of all polynomials in $\rrpos{\jj}$ whose roots lie in the interval $[0, R]$.
This collection is compact, and so $\myphi{\nn}{\kk}{\dd}{\uu}{\cdot}{q}$ (a continuous function in the roots of $p$) achieves its minimum.
Let $p_0$ be a polynomial achieving this minimum; in particular, note that  $\myphi{\nn}{\kk}{\dd}{\uu}{p_0}{q} \leq \myphi{\nn}{\kk}{\dd}{\uu}{p}{q} \leq 0$, which means (by hypothesis $(ii)$) $p_0$ must be nonbasic.

Since $p_0$ is nonbasic, it has a decomposition $p_0 = \ptil_0 + \phat_0$ with the properties of Lemma~\ref{lem:recPinch}.
In particular, note that 
\begin{enumerate}
\item[a.] implies (by the linearity of $\mysum{\dd}{\kk}$) that $[p_0 \mysum{\dd}{\kk} q] = [\ptil_0 \mysum{\dd}{\kk} q] + [\phat_0 \mysum{\dd}{\kk} q]$
\item[b.] implies that $\ptil_0$ has all of its roots in $[0, R]$, so the choice of $p_0$ as a minimizer ensures that $\myphi{\nn}{\kk}{\dd}{\uu}{p_0}{q} \leq \myphi{\nn}{\kk}{\dd}{\uu}{\ptil_0}{q}$.
\item[c.] implies (by plugging in to the definition of $\myphi{\nn}{\kk}{\dd}{\uu}{\cdot}{\cdot}$)
\begin{align*}
  \myphi{\nn}{\kk}{\dd}{\uu}{p_0}{q} + \Theta^{\nn}_{\uu}(p_0 \mysum{\dd}{\kk} q) 
= \myphi{\nn}{\kk}{\dd}{\uu}{\ptil_0}{q} + \Theta^{\nn}_{\uu}(\ptil_0 \mysum{\dd}{\kk} q)
= \myphi{\nn}{\kk}{\dd}{\uu}{\phat_0}{q} + \Theta^{\nn}_{\uu}(\phat_0 \mysum{\dd}{\kk} q).
\end{align*}
\end{enumerate}
Hence Properties b. and c. combine to give $\Theta^{\nn}_{\uu}(\ptil_0 \mysum{\dd}{\kk} q) \leq \Theta^{\nn}_{\uu}(p_0 \mysum{\dd}{\kk} q)$ and so 
\begin{equation}\label{eq:c1}
\maxroot{\Wtrans{\nn}{\uu}{[\ptil_0 \mysum{\dd}{\kk} q]}}
\leq
\maxroot{\Wtrans{\nn}{\uu}{[p_0 \mysum{\dd}{\kk} q]}}.
\end{equation}

Now let $\beta = \maxroot{\Wtrans{\nn}{\uu}{[p_0 \mysum{\dd}{\kk} q]}}$ and assume (for the moment) that 
\begin{equation}\label{eq:doit}
\beta^2 \geq \maxroot{[p_0 \mysum{\dd}{\kk} q] \times [\ptil_0 \mysum{\dd}{\kk} q] \times [\phat_0 \mysum{\dd}{\kk} q]}.
\end{equation}
This would imply that the decomposition 
\[
[p_0 \mysum{\dd}{\kk} q] = [\ptil_0 \mysum{\dd}{\kk} q] + [\phat_0 \mysum{\dd}{\kk} q]
\]
satisfies the requirements of Corollary~\ref{cor:straddle_roots}, which would allow us to extend (\ref{eq:c1}) to
\[
\maxroot{\Wtrans{\nn}{\uu}{[\ptil_0 \mysum{\dd}{\kk} q]}}
\leq
\maxroot{\Wtrans{\nn}{\uu}{[p_0 \mysum{\dd}{\kk} q]}}
\leq 
\maxroot{\Wtrans{\nn}{\uu}{[\phat_0 \mysum{\dd}{\kk} q]}}.
\]
Using Property c. again, this would then imply that $\myphi{\nn}{\kk}{\dd}{\uu}{\phat}{q} \leq \myphi{\nn}{\kk}{\dd}{\uu}{p_0}{q} < 0$, a contradiction to the initial hypothesis (since $\phat \in \rrpos{j-1}$).

Hence it suffices to prove (\ref{eq:doit}).
First we note that Corollary~\ref{lem:monotU} easily gives
\[
\beta^2 
= \maxroot{\Wtrans{\nn}{\uu}{[p_0 \mysum{\dd}{\kk} q]}}^2
\geq \maxroot{\Wtrans{\nn}{0}{[p_0 \mysum{\dd}{\kk} q]}}^2
= \maxroot{[p_0 \mysum{\dd}{\kk} q]}
\] 
and that (\ref{eq:c1}) combined with the same Corollary gives
\[
\beta^2 
\geq \maxroot{\Wtrans{\nn}{\uu}{[\ptil_0 \mysum{\dd}{\kk} q]}}^2
\geq \maxroot{\Wtrans{\nn}{0}{[\ptil_0 \mysum{\dd}{\kk} q]}}^2
= \maxroot{[\ptil_0 \mysum{\dd}{\kk} q]}.
\]
Finally the fact that $\beta^2 \geq \maxroot{[\phat_0 \mysum{\dd}{\kk} q]}$ comes directly from Property d. in Theorem~\ref{lem:recPinch}.
\end{proof}


\section{Base Cases}\label{sec:proof}

Our proof of Theorem~\ref{thm:main} is inductive and utilizes both 
Lemma~\ref{lem:inductiond} and Lemma~\ref{lem:induction}.
Each of these lemmas will require its own ``base case''.

\begin{enumerate}
\item $p \in \rrpos{d}$ and $q(x) = x^{d-1}$ (Corollary~\ref{cor:allderiv}), and
\item $p, q \in \rrpos{d}$ are both basic polynomials (Lemma~\ref{lem:geg0}).
\end{enumerate}

Neither of these lemmas is particularly simple (despite being the ``base 
cases'' of an induction).
We prove Corollary~\ref{cor:allderiv} using a separate induction, first 
considering the case when $p$ is a basic polynomial (Lemma~\ref{lem:deriv}) and 
then using Lemma~\ref{lem:induction} to extend this to all polynomials.
While the proof of Lemma~\ref{lem:deriv} is mostly calculus, the functions that 
one needs to consider become complicated enough that we were forced to appeal 
to the aid of a computer in order to calculate them.
Section~\ref{sec:derivative} is dedicated to this case.

Lemma~\ref{lem:geg0}, on the other hand, will require an entire investigation 
of its own.
Corollary~\ref{cor:rectToGeg} will relate the rectangular additive convolution 
of two basic polynomials a well-studied class of orthogonal polynomials and we 
will utilize a number of well-known properties of these polynomials to prove 
the necessary inequalities.
To simplify the presentation, the proof of Lemma~\ref{lem:geg0} given in 
Section~\ref{sec:basic} will be contingent upon a bound on the Cauchy transform 
of Gegenbauer polynomials that will be proved in Section~\ref{sec:geg}.

Assuming these two base cases, the proof of Theorem~\ref{thm:main} is then 
straightforward:

\begin{reptheorem}{thm:main}
For $n\geq 0, \uu > 0, \dd \geq 1$ and $p, q \in \rrposle{d}$, we have 
$\myphi{\nn}{\kk}{\dd}{\uu}{p}{q} > 0$.
\end{reptheorem}
\begin{proof}
We proceed by induction on $\dd$.
The base case (when $\dd = 1$) is covered by Lemma~\ref{lem:geg0}, since degree 
$1$ polynomials are (by definition) basic.

Now assume that, for some $D \geq 2$ the theorem holds for all $\dd < D$ and 
consider $p, q \in \rrposle{D}$.
If $\deg(p) < D$ or $\deg(q) < D$, we can appeal to the inductive hypothesis 
using Lemma~\ref{lem:inductiond} and Corollary~\ref{cor:allderiv}.
On the other hand, if $\deg(p) = \deg(q) = D$, we can appeal to the inductive 
hypothesis using Lemma~\ref{lem:induction} and Lemma~\ref{lem:geg0}.
\end{proof}

In order to make the presentation of the two lemmas more readable, we first show that the parameter $\alpha$ can be effectively scaled out of statements pertaining to $\myphi{\nn}{\kk}{\dd}{\uu}{p}{q}$.
Note that the transformation $p \to p_\alpha$ in Lemma~\ref{lem:alpha1} preserves basic polynomials, so statements that are restricted to basic polynomials can be scaled out as well.

\begin{lemma}\label{lem:alpha1}
For a fixed $\alpha > 0$ and polynomials $p, q$ with nonnegative roots, let $p_\alpha(x) = p(\alpha^2 x)$ and $q_\alpha(x) = q(\alpha^2 x)$ .
Then 
\[
\myphi{\nn}{\kk}{\dd}{\uu}{p}{q} = \alpha \myphi{\nn}{\kk}{\dd}{1}{p_\alpha}{q_\alpha}
\]
\end{lemma}
\begin{proof}
By (\ref{def:myphi}), it suffices to show
\[
\maxroot{ \Wtrans{\nn}{\uu}{p}}
 = 
\alpha \cdot \maxroot{\Wtrans{\nn}{1}{p_\alpha}}
\]
for arbitrary $p$.
Computing, we have
\begin{align*}
\Wtrans{\nn}{1}{p_\alpha} (x) 
&= x^{2n} p_\alpha(x^2)^2 - 1[x^{2n} p_\alpha(x^2)]'[r(x^2)]'
\\&= x^{2n} p(\alpha^2 x^2)^2 - \left( 2n x^{2n-1} p(\alpha^2x^2) + 2 x^{2n+1} \alpha^2 p'(\alpha^2 x^2) \right) \left(2\alpha^2 x p'(\alpha^2x^2) \right)
\\&= x^{2n} \left( p(\alpha^2 x^2)^2 - 4 n \alpha^2 p(\alpha^2x^2)p'(\alpha^2x^2) - 4 x^2 \alpha^4 p'(\alpha^2 x^2)^2 \right)
\end{align*}
so 
\begin{align*}
\maxroot{\Wtrans{\nn}{1}{p_\alpha}}
&=
\maxroot{p(\alpha^2 x^2)^2 - 4 n \alpha^2 p(\alpha^2x^2)p'(\alpha^2x^2) - 4 x^2 \alpha^4 p'(\alpha^2 x^2)^2 }
\\&= \frac{1}{\alpha} \cdot \maxroot{p(x^2)^2 - 4 n \alpha^2 p(x^2)p'(x^2) - 4 x^2 \alpha^2 p'(x^2)^2 }
\\&= \frac{1}{\alpha} \cdot \maxroot{\Wtrans{\nn}{\uu}{p}}.
\end{align*}
\end{proof}

\subsection{The case of $q(x) = x^{d-1}$}\label{sec:derivative}

As mentioned earlier, we start by proving the case when $p$ is a basic polynomial.

\begin{lemma}\label{lem:deriv}
Let $\dd \geq 2$, $\nn \geq 0$, $\uu > 0$ and let $p \in \rrpos{\dd}$ be a basic polynomial.
Then $\myphi{\nn}{\nn}{\dd}{\uu}{p}{x^{\dd-1}} > 0$.
\end{lemma}
\begin{proof}
By Lemma~\ref{lem:alpha1}, it suffices to prove the lemma when $\uu = 1$.
To simplify notation slightly, we will sometimes write $m = n + d$.
Note that for $p(x) = (x - \lambda)^d$ and $q(x) = x^{d-1}$, Lemma~\ref{lem:basictheta} implies that 
\[
\Theta^{n}_1(p) = \sqrt{m^2 + \lambda} + d
\AND
\Theta^{n}_1(q) = n - 2.
\]
Letting 
\[
\Dp 
= [ p \mysum{d}{n} x^{d-1}]
= (x-\lambda)^{d-2} \left(x- \frac{n+1}{m} \lambda \right)
\]
it then suffices to show that
\[
\sqrt{n^2+\maxroot{\Wtrans{n}{1}{\Dp}}^2} < \sqrt{m^2 + \lambda} + d - 2
\]
or, squaring both sides and rearranging,
\begin{equation} \label{eq:az}
\maxroot{\Wtrans{n}{1}{\Dp}}^2 < \bigg(2(d-1)-m+\sqrt{m^2+\lambda}\bigg)\bigg(m-2+\sqrt{m^2+\lambda})\bigg) := \mu_\lambda
\end{equation}

Furthermore, we can calculate
\[
\Wtrans{n}{1}{\Dp(x)}
=(x^2-\lambda)^{2(d-3)}x^{2 n} s_{\lambda}(x^2)
\]
where $s_\lambda \in \rrpos{4}$ has coefficients that depend on $n, d, \lambda$.
Hence 
\[
\maxroot{\Wtrans{n}{1}{\Dp(x)}}^2 = \maxroot{s_{\lambda}},
\]
and so (\ref{eq:az}) is equivalent to having $\maxroot{s_{\lambda}} < \mu_\lambda$.
Our approach to proving this will be to show that $s_{\lambda}(\mu_{\lambda} + \epsilon) > 0$ for all $\epsilon \geq 0$.
Consider the Taylor series of the function
\[
f(\lambda)= s_{\lambda}(\mu_{\lambda}).
\]
around $\lambda = 0$.
With the help of a computer, one can calculate that $f(0) = f'(0) = 0$ and that
\[
f^{(2)}(0)= \frac{32(d-1)^2(m-1)(n+1)(m+d-2)}{m^3} > 0
\]
and
\[
 f^{(3)}(\lambda)= \frac{24(n+1)(d-2) g(\lambda)}{m^2(m^2+\lambda)^{\frac{5}{2}}}
\]
where
\[
g(\lambda)= 2(d-1)(m^4-m^3)+ m^2\bigg((2d-1)^2+3\lambda d-4\lambda \bigg) +\frac{5}{4}(d-1)\lambda^2+(d-1)m \lambda.
\]
So for $d \geq 2$, we have $(2d-1)^2+3\lambda d-4\lambda \geq 0$, and for $m \geq d$, all other terms in $f^{(3)}(\lambda)$ are trivially nonnegative.
Hence $f(\lambda) > 0$ for all $\lambda > 0$.

Now note that, because the leading coefficient of $s_\lambda$ is positive, the fact that $s_\lambda(\mu_\lambda) > 0$ implies that the number of roots of $s_\lambda$ that are larger than $\mu_\lambda$ is even.
On the other hand, when $\lambda = 0$, 
\[
s_0(x) = x^3(x - 4(m-1)(d-1))
\]
and so has three roots at $x = 0$ and one at $\mu_0 = 4(m-1)(d-1) > 0$.
As $s_{\lambda }$ is real rooted for all $\lambda \geq 0$ and the roots of $s_{\lambda}$ are continuous functions of its coefficients (and thus of $\lambda$) we can conclude that for small $\lambda$ all but one of the roots of $s_{\lambda}$ must be near $0$; 
in other words, the function 
\[
g(\lambda) = \maxroot{s_\lambda} - \mu_\lambda
\]
is positive for sufficiently small $\lambda$.
Hence it suffices to show that $g(\lambda) \neq 0$ for any $\lambda > 0$.

Assume (for contradiction) that there exists $\lambda_0 > 0$ for which $g(\lambda_0) = 0$.
In other words, $\maxroot{s_{\lambda_0}} = \mu_{\lambda_0}$ which (in particular) means that $s_{\lambda_0}(\mu_{\lambda_0}) = 0$.
But this is a contradiction, since we have shown $f(\lambda) = s_{\lambda}(\mu_{\lambda})$ to be strictly positive for $\lambda > 0$.
Hence $g(\lambda)$ must remain strictly positive for all $\lambda > 0$, finishing the proof.
\end{proof}

Using Lemma~\ref{lem:induction}, we can then extend Lemma~\ref{lem:deriv} to all polynomials.

\begin{corollary}\label{cor:allderiv}
Let $2 \leq \jj \leq \dd$, $\nn \geq 0$, $\uu > 0$ and let $p \in \rrpos{\jj}$ be an (arbitrary) polynomial.
Then $\myphi{\nn}{\nn}{\dd}{\uu}{p}{x^{\dd-1}} > 0$.
\end{corollary}

\begin{proof}
Let $S(\jj, \dd)$ be the statement
\[
\myphi{\nn}{\nn}{\dd}{\uu}{p}{x^{\dd-1}} > 0 \text{ for all $n \geq 0$, $\uu > 0$ and all $p \in \rrpos{\jj}$}.
\]
Our goal is then to prove $S(\jj, \dd)$ for all $2 \leq \jj \leq \dd$, and we 
will proceed by induction on $\jj + \dd$.
The base case is when $\jj = \dd = 2$, which we will consider below.
Now assume $S(\jj, \dd)$ to hold whenever $\jj + \dd < K$ and consider a polynomial $p$ with degree $\jj \leq \dd$ for which $\jj + \dd = K$.
We split into two cases:
\begin{enumerate}
\item~$\jj < \dd$:

Since both $p$ and $x^{d-1}$ have degree less than $\dd$, we can apply Lemma~\ref{lem:reduce} to get
\[
[ p \mysum{\dd}{\nn}  x^{d-1} ] 
= \frac{1}{\dd (\dd + \nn)} [ p \mysum{\dd-1}{\nn} [x^{d-1} \mysum{\dd}{\nn} x^{\dd-1} ] ]
= c_{\dd, \nn} [ p \mysum{\dd-1}{\nn} x^{\dd-2} ]
\]
for some constant $c_{\dd, \nn}$.
That is, $S(\jj, \dd-1) \Rightarrow S(\jj, \dd)$.

\item~$\jj = \dd$:

A combination of Lemma~\ref{lem:induction} and Lemma~\ref{lem:deriv} gives that
\[ 
S(\dd-1, \dd) \Rightarrow S(\dd, \dd) 
\]
\end{enumerate}
Hence in both cases we have reduced the statement $S(\jj, \dd)$ to one that we know (by the inductive hypothesis) is true, so $S(\jj, \dd)$ is true as well.

To finish the proof, we then need to consider only the base case: $\jj = \dd = 2$.
For $t \in [0, a]$, set
\[
p_t(x) = (x-a)^2 - t^2 = (x - a - t)(x - a + t)
\]
and note that
\[
[ p_t  \mysum{2}{n} x] = x - a \frac{n+1}{n+2}
\]
is independent of $t$.
Hence we have
\[
\myphi{\nn}{\nn}{1}{\uu}{p_t}{x} 
 = \Theta^{n}_{\uu}(p_t) - \Theta^{n}_{\uu}(p_0) + \myphi{\nn}{\nn}{1}{\uu}{p_0}{x}
\]
where $\myphi{\nn}{\nn}{1}{\uu}{p_0}{x} > 0$ by Lemma~\ref{lem:deriv}, and so it suffices to show that 
\begin{equation}\label{eq:theta_t}
\maxroot{\Wtrans{\nn}{\uu}{p_t}} \geq \maxroot{\Wtrans{\nn}{\uu}{p_0}}.
\end{equation}
By Corollary~\ref{lem:translate} and Lemma~\ref{lem:simplify}, this is equivalent to showing that 
\[
[ \log p_t ]'(x) \leq [ \log p_0 ]'(x).
\]
for all $x > \maxroot{p_0 p_t} = a+t$.
However, it is easy to check that
\[
\frac{\partial}{\partial t} [ \log p_t ]'(x) = \frac{4 t (a - x)}{(x - a - t)^2(x - a + t)^2},
\]
which is negative for $x > a + t$. 
\end{proof}

\subsection{The case of basic polynomials}\label{sec:basic}

We start by finding a generating function for the rectangular convolution of two basic polynomials.
The derivation will use the following well known generalization of the {\em binomial theorem} (see, for example, \cite{Wilf}).

\begin{theorem}\label{thm:binom}
The function $(1+z)^{-k}$ has the formal power series expansion
\[
\frac{1}{(1+z)^k} 
= \sum_{i=0}^{\infty} \binom{k+i-1}{i}(-z)^i.
\]
\end{theorem}

\begin{lemma}\label{lem:gf}
For all $\lambda, \mu > 0$ and $\kk \geq 0$, the polynomials 
\[
q^{\lambda, \mu}_{\kk, \dd}(y) = 
\binom{\kk + \dd}{\dd} [ (x-\lambda)^\dd \mysum{\dd}{\kk} (x-\mu)^\dd ](y).
\]
satisfy the formal power series identity
\[
\sum_{\dd} q^{\lambda, \mu}_{\kk, \dd}(y)~t^\dd
= \frac{1}{((1 + \mu t)(1 + \lambda t)  - y t )^{\kk+1}}.
\]
\end{lemma}

\begin{proof}
Using (\ref{eq:alt_def}), we can write $q^{\lambda, \mu}_{\kk, \dd}$ explicitly.
Letting 
\[
c(s, i, j, l) = \frac{(s-i)!(s-j)!}{s!(s-\ell)!}
\]
we have
\begin{align*}
q^{\lambda, \mu}_{\kk, \dd}(y)
&= \binom{\kk + \dd}{\dd} \sum_{\ell} y^{\dd-\ell}
\left( 
\sum_{i+j=\ell}
c(\dd, i, j, \ell) 
c(\kk+\dd,i, j, \ell) 
\binom{\dd}{i}\binom{\dd}{j} (-\mu)^i (-\lambda)^j 
\right)
\\&= \sum_{i+j+\ell = \dd} \binom{\kk +\ell}{\ell}\binom{\kk + j+ 
\ell}{j}\binom{\kk + i + \ell}{i} (-\mu)^i (-\lambda)^j y^{\ell}.
\end{align*}
Hence we have the formal power series identity
\begin{align*}
\sum_\dd q^{\lambda, \mu}_{\kk, \dd}(y)~t^\dd
&= 
 \sum_{i, j, \ell} \binom{\kk +\ell}{\ell}\binom{\kk + j+ \ell}{j}\binom{\kk + 
 i + \ell}{i} (-\mu)^i (-\lambda)^j y^{\ell} t^{i + j + \ell} 
\\&=  \sum_\ell (1 + \mu t)^{-(\kk + \ell + 1)}(1 + \lambda t)^{-(\kk + \ell + 
1)} \binom{\kk +\ell}{\ell} y^{\ell} t^{\ell} \label{eq:a1} \qquad 
\text{(Theorem~\ref{thm:binom} on $i, j$) }
\\&= \big((1 + \mu t)(1 + \lambda t)\big)^{-(\kk + 1)}\sum_{\ell} \binom{\kk 
+\ell}{\ell} \left(\frac{ y t}{(1 +\mu t)(1 + \lambda t)}\right)^{\ell} 
\\&= ((1 + \mu t)(1 + \lambda t)  - y t )^{-(\kk+1)} \hspace{4.5cm}
\text{(Theorem~\ref{thm:binom} on $\ell$)}.
\end{align*}
\end{proof}

This provides an easy link between the rectangular convolution of basic polynomials and the Gegenbauer polynomials studied in Section~\ref{sec:geg}:

\begin{corollary}\label{cor:rectToGeg}
For all $\lambda, \mu > 0, \kk \geq 0, \dd \geq 1$ we have
\[
\binom{\kk + \dd}{\dd} [ (x-\lambda)^\dd \mysum{\dd}{\kk} (x-\mu)^\dd ](y)
= 
 (\lambda \mu)^{\dd/2}
C_{\dd}^{\nn+1} \left(\frac{y - (\lambda +\mu)}{2 \sqrt{\lambda \mu }} \right).
\]
\end{corollary}
\begin{proof}
Compare Lemma~\ref{lem:gf} to (\ref{eq:gf}).
\end{proof}

Bounding the $H$-transform of the convolution of two basic polynomials can therefore be reduced to bounding the Cauchy transform of Gegenbauer polynomials.
We prove the necessary bound in Section~\ref{sec:geg}, but reproduce the theorem here for continuity.

\begin{reptheorem}{thm:ans}
Consider the bivariate polynomial 
\[
f(x, y) = \dd (x^2 -1)y^2 + 2 \nn x y - (2\nn + d)
\]
and assume that for a given $s > \gamma_{\nn/\dd}$ and $t > 0$ that $f(s, t) \geq 0$.
Then 
\[
\cauchy{C_{\dd}^{\nn+1}}{s} = \frac{[C_{\dd}^{\nn+1}]'(s)}{d [C_{\dd}^{\nn+1}](s)]} \leq t.
\]
\end{reptheorem}

The proof of Lemma~\ref{lem:geg0} will require a number of identities that are not, by themselves, important to understanding the overall proof.
In order to keep the continuity of ideas in the proof, we have separated these identities out into a separate lemma:

\begin{lemma} \label{lem:claimT}
For $\lambda, \mu, \nn, \dd \in \R$, let
\[
t := \sqrt{(\nn+\dd)^{2} + \lambda} + \sqrt{(\nn+\dd)^{2} + \mu}
\AND
t_* :=\sqrt{t^2-2t\nn}
\]
and also let
\[
T := \frac{{t_*}^2-(\lambda+\mu)}{2\sqrt{\lambda \mu}}
\AND
R := \frac{\sqrt{\lambda\mu}}{\dd t}
\AND
\gamma_{\nn/\dd} := \sqrt{1 - \frac{\nn^2}{(\nn+\dd)^2}}.
\]
Then the following identities hold:
\begin{enumerate}
\item $\dd (T^2 -1)R^2 + 2 \nn T R - (2\nn + d) = 0$.
\item $\left(\dd R (T^2-1)+ \nn T \right)^2 = (\nn + \dd)^2 (T^2- \gamma_{\nn/\dd}^2)$
\end{enumerate}
\end{lemma}
\begin{proof}
For real numbers $a, b$, it is easy to check that $\sqrt{a} + \sqrt{b}$ is a root of the polynomial $x^4 - 2(a + b)x^2 + (a-b)^2$ by simple substitution.
Hence $t^4- 2\left(\lambda + \mu + 2(\nn+\dd)^2 \right) t^2 + (\lambda - \mu)^2=0$, or (rearranging slightly), 
\begin{equation}\label{eq:Tpoly}
\left(t^2 - (\lambda + \mu) \right)^2 
= 4 (\nn+\dd)^2 t^2 + 4 \lambda \mu
= 4 t^2 \left( (\nn+\dd)^2 + \dd^2 R^2 \right)
\end{equation}
By definition of $T$, however, we have
\begin{equation}\label{eq:Tdef}
\left(t^2 - (\lambda+\mu) \right)^2
= \left(2\sqrt{\lambda \mu} T + 2 t \nn \right)^2
= \left(2\dd t R T + 2 t \nn \right)^2
= 4 t^2 \left( \dd^2 R^2 T^2 + 2 \dd \nn R T + \nn^2 \right)
\end{equation}
Equating (\ref{eq:Tpoly}) and (\ref{eq:Tdef}) and then rearranging gives
\begin{equation}\label{eq:T1}
\dd^2 R^2 (T^2 - 1) + 2 \nn \dd R T = \left( (\nn+\dd)^2 - \nn^2 \right) = 2 \nn \dd + \dd^2,
\end{equation}
which clearly implies {\em 1.}
Now note that if we multiply out {\em 2.}, we get
\begin{align*}
\dd^2 R^2 (T^2-1)^2 + 2 \dd R (T^2-1) \nn T + \nn^2 T^2
&= (\nn + \dd)^2 \left( T^2 - \left(1 - \frac{\nn^2}{(\nn+\dd)^2} \right) \right)
\\&= (\nn + \dd)^2 (T^2 -1) + \nn^2 T^2
\end{align*}
which, after canceling the $\nn^2 T^2$ terms and dividing out $T^2-1$ matches (\ref{eq:T1}).
\end{proof}

Finally, we are able to prove the lemma:

\begin{lemma}\label{lem:geg0}
Let $\dd \geq 1$, $\nn \geq 0$, $\uu > 0$ and let $p, q \in \rrpos{\dd}$ be basic polynomials.
Then $\myphi{\nn}{\nn}{\dd}{\uu}{p}{q} > 0$.
\end{lemma}
\begin{proof}
Again by Lemma~\ref{lem:alpha1} it suffices to consider the case $\uu=1$.
We set
\[
p(x) = (x-\aa)^d
\AND
q(x) = (x-\bb)^d
\AND
r(x) = [p \mysum{\dd}{\kk} q](x).
\]
and note that Corollary~\ref{cor:rectToGeg} and Theorem~\ref{thm:ans} show that 
\begin{equation}\label{eq:maxrootr}
\frac{\maxroot{r} - (\lambda + \mu)}{2\sqrt{\lambda \mu}} 
\leq \gamma_{\nn/\dd} 
= \sqrt{1 - \frac{\nn^2}{(\nn+\dd)^2}}.
\end{equation}
Also set $w_r = \maxroot{\Wtrans{\nn}{1}{r}}$; substituting this and Lemma~\ref{lem:basictheta} into the definition of $\myphi{\nn}{\nn}{\dd}{1}{\cdot}{\cdot}$, we get that it suffices to show
\begin{equation}\label{eq:ineq}
\sqrt{\nn^2  + w_r^2} \leq \sqrt{(\nn + \dd)^2  + \lambda} + \sqrt{(\nn + \dd)^2  + \mu} - \nn.
\end{equation}
Next define the quantities
\begin{align*}
t 
:= \sqrt{(\nn+\dd)^{2} + \lambda} + \sqrt{(\nn+\dd)^{2} + \mu} 
\AND
t_*:=\sqrt{t^2-2t\nn},
\end{align*}
noting that the string of inferences
\[
\lambda, \mu > 0 
\Rightarrow 
t \geq 2(\nn + \dd)
\Rightarrow
t(t-2\nn) \geq 2 \dd t \geq 0
\]
show that $t^*$ is well defined.
Then to prove (\ref{eq:ineq}), it suffices to show (after squaring both sides, and substituting) the inequality $w_r \leq t_*$.

Finally, define 
\[
W := \frac{w_r^2 - \lambda - \mu}{2 \sqrt{\lambda \mu}}
\AND
T := \frac{t_*^2 - \lambda - \mu}{2 \sqrt{\lambda \mu}} 
\]
and notice that the identity in Lemma~\ref{lem:claimT}.2 implies that $T \geq \gamma_{\nn/\dd}$.
Then if $W \leq \gamma_{n/d}$, we are done, so we are left to consider the case when $W \geq \gamma_{n/d}$.

For $W, T \geq \gamma_{n/d}$, we then have by (\ref{eq:maxrootr}) that 
\[
w_r \leq \sqrt{\maxroot{r}}
\AND
t_* \leq \sqrt{\maxroot{r}}
\]
so Lemma~\ref{lem:Hmonotone} implies that the inequality $w_r \leq t_*$ holds if and only if
\begin{equation}\label{eq:Ht*}
\Htrans{n}{r}(t_*) \leq \Htrans{n}{r}(w_r) = 1.
\end{equation}
By definition, we have
\begin{align*}
\Htrans{n}{r}(x)
= 2 \dd x \cauchy{r}{x^2} \left( \frac{2 \nn}{x} + 2 \dd x \cauchy{r}{x^2} \right)
= 4 \dd \left( \nn \cauchy{r}{x^2} + \dd x^2 \cauchy{r}{x^2}^2 \right)
\end{align*}
and so we can rewrite (\ref{eq:Ht*}) as
\[
 \dd t_*^2 \cauchy{r}{t_*^2}^2 + \nn \cauchy{r}{t_*^2} \leq \frac{1}{4\dd}.
\]
We now bound $\cauchy{r}{t_*^2}$ by noting that the identity in Lemma~\ref{lem:claimT}.1 allows us to apply Theorem~\ref{thm:ans} to the ordered pair
\[
(s, t) = \left(\cauchy{\geg{\dd}{\nn+1}}{T}, \frac{\sqrt{\lambda \mu}}{\dd t} \right).
\]
By Corollary~\ref{cor:rectToGeg}, we then get the inequality
\[
2 \sqrt{\lambda \mu} \cauchy{r}{t_*^2} 
= \cauchy{\geg{\dd}{\nn+1}}{T}
\leq \frac{\sqrt{\lambda \mu}}{ \dd t }
\quad
\Longrightarrow
\quad
\cauchy{r}{t_*^2} 
\leq \frac{1}{ 2\dd t }.
\]
Hence
\[
\dd t_*^2 \cauchy{r}{t_*^2}^2  + \nn \cauchy{r}{t_*^2} 
\leq \frac{t_*^2}{4 \dd t^2} + \frac{\nn}{2 \dd t}
= 
\frac{t_*^2 + 2 \nn t}{4 \dd t^2} = \frac{1}{4\dd}
\]
as required.
\end{proof}


\section{Gegenbauer polynomials}\label{sec:geg}

The Gegenbauer (or ultraspherical) polynomials $\geg{n}{\alpha}(x)$ are a collection of polynomials which are orthogonal with respect to the weight function $w(x) = (1 - x^2)^{\alpha - 1/2}$ on the interval $[-1, 1]$.
They can be computed explicitly using a generating function
\begin{equation}\label{eq:gf}
\sum_n \geg{n}{\alpha}(x) t^n = \frac{1}{(1 - 2 x t + t^2)^{\alpha}}
\end{equation}
or by the three-term recurrence given in Lemma~\ref{lem:identities}.
They are a special case of the more general {\em Jacobi polynomials}:
\[
\geg{d}{\alpha}(x)
= \frac{\Gamma(\alpha+1/2)\Gamma(d+2\alpha)}{\Gamma(2\alpha)\Gamma(d+\alpha + 1/2)} P_{d}^{(\alpha -1/2,\alpha -1/2)}(x)
\]
and are themselves a generalization of two other well-studied families of orthogonal polynomials:
\begin{enumerate}
\item Chebyshev polynomials of the second kind ($\alpha = 1$), and
\item Legendre polynomials ($\alpha = 1/2$).
\end{enumerate}

We will use the following identities, which can be derived directly from (\ref{eq:gf}) or taken as a specialization of known identities for Jacobi polynomials (see, for example, \cite{szego_book}).
Note that the $\Gamma(\cdot)$ appearing in the second identity is the usual {\em Gamma function}
\[
\Gamma(x) = \int_{0}^{\infty} t^{x-1}e^{-t} \d{t}.
\]

\begin{lemma}\label{lem:identities}
The following hold for all real numbers $\alpha > 0$ and all integers $d > 0$.
\begin{enumerate}
\item Recurrence relation, with convexity coefficients: 
\[
x\geg{d}{\alpha}(x)=\frac{d+1}{2d+2\alpha} \geg{d+1}{\alpha}(x) + \frac{d+2\alpha-1}{2d+2\alpha}\geg{d-1}{\alpha}(x)\]
\item Value(s) at 1:
 \[
\geg{d}{\alpha}(1)= \binom{d+2\alpha-1}{d}
= \frac{\Gamma(d+2\alpha)}{\Gamma(2\alpha)\Gamma(d+1)}
\AND
\cauchy{\geg{d}{\alpha}}{1}
= \frac{d + 2 \alpha}{2 \alpha +1}
\]
\item Differential equation:
\[
(1-x^2) \deriv{x} \geg{d}{\alpha}(x)
= -dx\geg{d}{\alpha}(x) +(d+2\alpha-1)\geg{d-1}{\alpha}(x)
\]
\end{enumerate}

\end{lemma}

Our goal will be to derive bounds on the Cauchy transform of $\geg{d}{\alpha}$ at values greater than its roots. 
Fortunately, much is known about the largest roots of Gegenbauer polynomials; in particular, the following bound from \cite{root_bound} provides a useful starting point:
\begin{lemma}[Elbert, Laforgia] \label{laforgia}
For fixed $\alpha > 0$, the roots of $\geg{d}{\alpha}$ lie in the interval $[-\gamma_{\alpha/d}, \gamma_{\alpha/d}]$ where 
\[
\gamma_\theta := \frac{\sqrt{2 \theta + 1}} {\theta+1}
\]
\end{lemma}
\begin{remark}\label{rem:gammatheta}
Note that $\gamma_\theta$ is decreasing in $\theta$, and so decreasing the degree $d$ or increasing the parameter $\alpha$ will cause the interval to shrink.
This coincides with the well known fact that all of the positive roots of $\geg{d}{\alpha}$ are increasing as $d$ grows and decreasing as $\alpha$ grows \cite{szego_book}.
\end{remark}

The bound we will derive differs from Lemma~\ref{laforgia} (and all other known bounds, as far as we can tell) in that it requires us to compare the largest roots of $\geg{\dd}{\alpha}$ as $\dd$ and $\alpha$ grow together in a linear way.
Specifically, we will be interested in understanding the largest root of 
${\geg{d}{1+d\theta}}$ for a generic constant $\theta$.
The intuition above tells us that the growth in $d$ and the growth in $\alpha = 1 + d \theta$ should push the roots in opposite directions, and so it should not be surprising that the computations become somewhat delicate. 

It should be noted that the asymptotic behavior of these polynomials has been studied, and it is well known that the density of the roots of these polynomials converges to a fixed distribution that will depend on the parameter $\theta$. 
The distribution can be computed using a result of Kuijlaars and Van Assche regarding the asymptotic root distributions of Jacobi polynomials \cite{asymptotic_zeroes}.
The following is a reformulated version of the theorem specific to our polynomials: 

\begin{theorem}[Kuijlaars, Van Assche]\label{thm:asymptgegen}
For all $\theta > 0$, the density of the roots of $\geg{d}{1 + \theta d}$ converges (as $d \to \infty$) to the function
\[
\mu_\theta(y) = \frac{1+\theta}{\pi} \frac{\sqrt{\gamma_\theta^2 - y^2}}{1-y^2} 
\one_{[-\gamma_\theta, \gamma_\theta]}(y).
\]
\end{theorem}

An asymptotic bound on the Cauchy transform can then be derived from Theorem~\ref{thm:asymptgegen}: \begin{corollary}\label{cor:asymptcauchygegen}
For all $\theta > 0$, and all $x > \gamma_\theta$, we have
\begin{equation}\label{eq:asympt}
\lim_{d \to \infty} \cauchy{\geg{d}{1 + \theta d}}{x} =\frac{-\theta x+(1+\theta)\sqrt{x^2-\gamma_\theta^2}}{(x^2-1)}. 
\end{equation}
\end{corollary}

We give a brief sketch of the computation in an Appendix for the benefit of the 
reader. 
Note that for the specific case of $\theta = 0$, the formula in Theorem~\ref{thm:asymptgegen} simplifies greatly to a well-known bound on the Cauchy transform of Chebyshev polynomials:
\[
\lim_{d \to \infty} \cauchy{\geg{d}{1}}{x}= \frac{1}{\sqrt{x^2-1}}
\]
for all $x > 1$.

The rest of this section will be devoted to showing that the bound in Corollary~\ref{cor:asymptcauchygegen} holds for the individual polynomials $\geg{d}{1 + \theta d}$ as well.
In particular, we will show that the the sequence $\{ \cauchy{\geg{d}{1 + \theta d}}{x} \}_d$ is increasing in $d$ for all $\theta > 0$ and all $x >\gamma^d_{\theta}$ where we define
\[
\gamma^d_{\theta} := \maxroot{\geg{d}{1+\theta d}(x)}.
\]
One part of this will be to show that the sequence $\{ \gamma^d_{\theta} \}$ is an  increasing function of $d$ (for fixed $\theta$).
For the moment, however, we will find it convenient to consider the following ``two-step'' maximum root: 
\[
\Gamma^d_{\theta}:= \max \left\{ \gamma^d_{\theta},  \gamma^{(d+1)}_{\theta} \right\}.
\]

\subsection{Nonasymptotic bounds}

To simplify notation slightly, we will fix $\theta$ and normalize the polynomials of interest (in a manner that is common when deriving such inequalities --- see \cite{askey}, for example).
For $j, k$ nonnegative integers, we will set
\[
p_{j, k}(x) = \frac{\geg{j}{1+\theta k}(x)}{\geg{j}{1 + \theta k}(1)}.
\]
\begin{definition}
We will say that a polynomial $p$ is {\em \fit{$\beta$}} if 
\begin{enumerate}
\item $p(x) < 0$ in $(\beta, 1)$, and
\item $p(x) > 0$ in $(1, \infty)$. 
\end{enumerate}
It is easy to check that \fit{$\beta$}ness is closed under nonnegative linear combinations (a fact that will be used in Lemma~\ref{lem:thetaorthogonal-fit}) and that Remark~\ref{rem:gammatheta} implies that any polynomial which is  \fit{$\Gamma^d_{\theta_1}$} is also \fit{$\Gamma^d_{\theta_2}$} whenever $\theta_2 < \theta_1$.
\end{definition}
The following lemma reduces the monotonicity statement we are interested in to one regarding certain polynomials being \fit{$\beta$}.

\begin{lemma}\label{lem: monotonictygegen}
The following statements are equivalent:
\begin{enumerate}
\item 
For any fixed $x > \Gamma^d_\theta$, we have 
\[
\cauchy{\geg{d}{1+\theta d}}{x} \leq \cauchy{\geg{d+1}{1+\theta (d+1)}}{x}.
\]
\item The polynomial
\begin{equation}\label{eq:qpoly}
\Delta_d(x) = p_{d+1, d+1}(x) p_{d-1, d}(x) - p_{d,d}(x) p_{d, d+1} (x)
\end{equation}
is \fit{$\Gamma^d_\theta$}.
\end{enumerate}
\end{lemma}
\begin{proof}
We start by rewriting the first statement as
\[
\frac{1}{d}\frac{\deriv{x}\geg{d}{1+\theta d}(x)}{\geg{d}{1+\theta d}(x)} 
< \frac{1}{d+1}\frac{\deriv{x}\geg{d+1}{1+\theta (d+1)}(x) }{\geg{d+1}{1+\theta (d+1)}(x)}.
\]
and notice that the third identity in Lemma~\ref{lem:identities} implies that
\[
\deriv{x} \geg{d}{1+\theta d}(x)= -dx\geg{d}{1+\theta d}(x) +\big(d-1+2(1+d\theta)\big)\geg{d-1}{1+\theta d}(x)
\]
and
\[
\deriv{x} \geg{d+1}{1+\theta (d+1)}(x)= -(d+1)x\geg{d+1}{1+\theta (d+1)}(x)+\bigg(d+2\big(1+(d+1)\theta\big)\bigg) \geg{d}{1+\theta (d+1)}(x).
\]

Plugging these in and canceling the like terms, we get the equivalent statement
\[
\frac{(d + 2 d \theta + 1)}{d(1-x^2)}
\frac{\geg{d-1}{1+\theta d}(x)}{\geg{d}{1+\theta d}(x)} 
< \frac{(d + 2 d \theta + 2 \theta + 2)}{(d+1)(1-x^2)}
\frac{\geg{d}{1+\theta (d+1)}(x)}{\geg{d+1}{1+\theta (d+1)}(x)}.
\]
Plugging in the second identity in Lemma~\ref{lem:identities} and simplifying, we see that this is equivalent to having $\Delta_d(x) < 0$ on the interval $( \Gamma^d_{\theta}, 1)$ and $\Delta_d(x) > 0$ on the interval $(1, \infty)$, which is the definition of being \fit{$\Gamma^d_\theta$}.
\end{proof}

%

Before proving anything with the polynomials $p_{j, k}$, it will be worthwhile to note the translation of the first identity in Lemma~\ref{lem:identities} into these polynomials:
\begin{equation}\label{eq:conexp}
x p_{j, k} = \lambda_{j, k} p_{j+1, k} + (1-\lambda_{j, k}) p_{j-1, k}
\end{equation}
where (for our fixed $\theta$)
\[
0 \leq \lambda_{j, k} = \frac{j + 2k \theta +2}{2j+2k\theta +2} \leq 1.
\]
We will use the following fact (that comes from direct calculation):
\begin{equation}\label{eq:diff}
\lambda_{j,k} - \lambda_{j-1, k-1} = \frac{(j - k) \theta - 1}{2(1 + j + k \theta)(j + (k-1)\theta)}.
\end{equation}

\begin{lemma}\label{lem:thetaorthogonal-fit}
The polynomial
\begin{equation}
\Delta_{j, d}(x) = 
p_{j+1, d+1}(x) p_{j-1, d}(x) - p_{j, d}(x) p_{j, d+1} (x) 
\end{equation}
is \fit{$\Gamma^d_\theta$} for all $1 \leq j \leq d$.
\end{lemma}
\begin{proof}
The fact that $x = 1$ is a root follows from the definition of $p_{j, k}$, and so it would suffice to show that $\Delta_{j, d}$ has at most one root in the desired interval, a fact that we will prove by induction (on $j$).
The base case can be computed explicitly:
\[
\Delta_{1, d} = \frac{x^2-1}{3 + 2(d+1) \theta}
\]
which is clearly \fit{$\Gamma^d_\theta$}.
For the inductive step, let 
$s = \lambda_{j, d+1}$ and $t = \lambda_{j-1, d}$.
By (\ref{eq:conexp}), we have 
\[
x p_{j, d+1} = s p_{j+1, d+1} + (1-s)p_{j-1, d+1}
\AND
x p_{j-1, d} = t p_{j, d} + (1-t)p_{j-2, d}
\]
and so
\[
t p_{j, d+1}p_{j, d} + (1-t)p_{j, d+1}p_{j-2, d} 
= s p_{j-1, d}p_{j+1, d+1} + (1-s)p_{j-1, d}p_{j-1, d+1}.
\]
Plugging in 
\[
p_{j+1, d+1}p_{j-1, d} 
= \Delta_{j, d} + p_{j, d}p_{j, d+1}
\AND
p_{j, d+1}p_{j-2, d} 
= \Delta_{j-1, d} + p_{j-1, d}p_{j-1, d+1}
\]
then gives 
\[
s \Delta_{j,d} = (1-t)\Delta_{j-1, d} + (t - s)\big(p_{j, d+1}p_{j, d} - p_{j-1, d}p_{j-1, d+1}\big)
\]
The induction hypothesis gives that $\Delta_{j-1, d}$ is \fit{$\Gamma^d_\theta$} and (\ref{eq:diff}) shows that $t - s > 0$.
Since $\theta$-orthogonal-fitness is closed under nonnegative combinations, it would then suffice to show that the polynomial $p_{j, d+1}p_{j, d} - p_{j-1, d+1}p_{j-1, d}$ is  \fit{$\Gamma^d_\theta$} for any $j \leq d$.
Again, $x = 1$ is obviously a root, and so the theorem would follow by showing that there is at most one real root in the desired interval.

However, this follows easily from interlacing properties: it is well known that consecutive orthogonal polynomial have interlacing roots, and one can show that a polynomial $p$ interlaces a polynomial $q$ if and only if there exist nonnegative constants $\lambda_i$ 
such that
\[
\frac{p}{q} 
= \sum_{i=1}^d \frac{\lambda_i}{x - r_i}
\AND
\]
where the $r_i$ are the roots of $q$ (see, for example, \cite{Wagner}).
In particular, this ratio is nonnegative and monotone decreasing at any $x > \maxroot{q}$.
Hence $p_{j-1, d}$ interlaces $p_{j, d}$ and $p_{j-1, d+1}$ interlaces $p_{j, d+1}$ and the product
\[
\frac{p_{j-1, d}}{p_{j, d}} \frac{p_{j-1, d+1}}{p_{j, d+1}}
\]
is nonnegative and monotone decreasing for $x > \maxroot{p_{j, d}}$ (recall that the monotonicity mentioned in Remark~\ref{rem:gammatheta} implies that $p_{d, k}$ has the largest root of these polynomials).
Hence
\[
\big(p_{j, d+1}p_{j, d} - p_{j-1, d}p_{j-1, d+1}\big)
= p_{j, d+1}p_{j, d}\left(1 - \frac{p_{d-1, k}}{p_{j, d}} \frac{p_{j-1, d+1}}{p_{j, d+1}} \right)
\]
can have at most one solution in the interval $(\maxroot{p_{j, d}}, \infty)$.
But for $j \leq d$, we have 
\[
\maxroot{p_{j, d}} = \maxroot{\geg{j}{1+\theta d}(x)} \leq \maxroot{\geg{d}{1+\theta d}(x)}  \leq \Gamma^{d}_\theta
\]
implying that $p_{j, d+1}p_{j, d} - p_{j-1, d}p_{j-1, d+1}$ has a single root in $(\Gamma^d_\theta, \infty)$ and proving the theorem.
\end{proof}

Note that, even though we were forced to consider the possibility that $\gamma^d_\theta > \gamma^{d+1}_\theta$ in the proof of Lemma~\ref{lem:thetaorthogonal-fit}, one direct implication of the lemma is that such a scenario is impossible.

\begin{corollary}\label{cor:prepreans}
For all $\theta > 0, \nn \geq 0$, and $\dd \geq 1$, the sequence
\[
 \left\{ \maxroot{\geg{\dd}{\dd \theta +1}} \right\}_d
\]
is monotone increasing, and
\[
 \lim_{d \to \infty} \maxroot{\geg{\dd}{\dd \theta +1}} = \gamma_{\theta} 
\]
where $\gamma_\theta = \frac{\sqrt{2 \theta + 1}} {\theta+1}$ (as in Lemma~\ref{laforgia}).
\end{corollary}
\begin{proof}

By Theorem~\ref{thm:asymptgegen}, we know that
\begin{equation}\label{eq:ineqthing}
\lim_{d \to \infty} \maxroot{\geg{d}{1 + d \theta}(x)} \leq \frac{\sqrt{2 \theta + 1}} {\theta+1} = \gamma_\theta
\end{equation}
and the combination of Lemma~\ref{lem: monotonictygegen} and Lemma~\ref{lem:thetaorthogonal-fit} show that this convergence is monotone increasing.
On the other hand, Lemma~\ref{laforgia} shows that the asymptotic root distribution is dense in $[ -\gamma_\theta, \gamma_\theta ]$, and so the inequality in (\ref{eq:ineqthing}) must be an equality.
\end{proof}

For our purposes, we will need a similar statement about the Cauchy transform:

\begin{corollary}\label{cor:preans}
For $n \geq 0$ and $d \geq 1$, we have
\begin{enumerate}
\item $\maxroot{\geg{\dd}{\nn+1}} \leq \gamma_{\nn/\dd}$
\item for all $x > \gamma_{\nn/\dd}$, we have
\[
\cauchy{\geg{d}{n + 1}}{x} 
\leq \frac{-\nn x + \sqrt{(\nn+\dd)^2(x^2-1) + \nn^2}}{\dd(x^2-1)}
\]
\end{enumerate}
where $\gamma_{\nn/\dd} = \sqrt{1 - \frac{\nn^2}{(\nn+\dd)^2}}$.
\end{corollary}
\begin{proof}
The first claim follows directly from Corollary~\ref{cor:prepreans} with $\theta = \nn / \dd$.
For the second claim, Corollary~\ref{cor:asymptcauchygegen} shows that
\[
\lim_{d \to \infty} \cauchy{\geg{d}{1 + \theta d}}{x} 
=\frac{-\theta x+(1+\theta)\sqrt{x^2-\gamma_\theta^2}}{(x^2-1)}. 
\]
and the combination of Lemma~\ref{lem: monotonictygegen} and Lemma~\ref{lem:thetaorthogonal-fit} show that this convergence is monotone increasing.
Setting $\theta = n/d$ and simplifying then gives the corollary.
\end{proof}

Note that while the form of Corollary~\ref{cor:preans}.2 is the more popular one in the literature, it has a downside when appearing in inequalities (the false appearance of a sign change at $x = 1$).
It is easy to check by cross multiplication that an equivalent way to write this inequality is
\begin{equation}\label{eq:altpreans}
\cauchy{\geg{d}{1 + \nn}}{x} 
\leq 
\frac{2\nn + \dd}{\nn x + \sqrt{ (\nn + \dd)^2(x^2-1) + \nn^2}}
\end{equation}
and this will be the form we use in our proof of Theorem~\ref{cor:preans}.

\begin{theorem}\label{thm:ans}
Consider the bivariate polynomial 
\[
f(x, y) = \dd (x^2 -1)y^2 + 2 \nn x y - (2\nn + d)
\]
and assume that for a given $s > \gamma_{\nn/\dd}$ and $t > 0$ that $f(s, t) \geq 0$.
Then 
\[
\cauchy{C_{\dd}^{\nn+1}}{s} \leq t.
\]
\end{theorem}
\begin{proof}
Fix $u > \gamma_{\nn/\dd}$ and let $G = \cauchy{\geg{d}{n + 1}}{u}$.
By Corollary~\ref{cor:preans} and (\ref{eq:altpreans}), we have 
\[
\frac{1}{G} \geq 
\frac{\nn u + \sqrt{ (\nn + \dd)^2(u^2-1) + \nn^2}}{2\nn + \dd}
\Longrightarrow
\frac{2\nn + \dd}{G} - \nn u \geq 
\sqrt{ (\nn + \dd)^2(u^2-1) + \nn^2}.
\]
Since both sides are positive, we can square them, to get
\[
\frac{(2\nn + \dd)^2}{G^2} -  \frac{2\nn t(2\nn + \dd)}{G} + \nn^2 u^2 \geq 
(\nn + \dd)^2(u^2-1) + \nn^2.
\]
Using the fact that $G > 0$, we can then simplify to get 
\[
\dd(u^2-1) G^2 + 2 \nn u G - (2 \nn + d) \leq 0.
\]
Hence for any $u > \gamma_{\nn/\dd}$, we have that
\begin{equation}\label{eq:smallerthan0}
f\left(u, \cauchy{\geg{d}{n + 1}}{u} \right) \leq 0.
\end{equation}

Now let $s > \gamma_{\nn/\dd}$ and $t > 0$ satisfy $f(s, t) \geq 0$.
It is well known that the range of the Cauchy transform is the positive reals, so there exists some $r$ for which $t = \cauchy{\geg{d}{n + 1}}{r}$.
It then suffices to show that $r \leq s$, since (due to the fact that the Cauchy transform is decreasing) that would imply
\[
\cauchy{\geg{d}{n + 1}}{s} \leq \cauchy{\geg{d}{n + 1}}{r} = t.
\]
as required.

To see that $r \leq s$, we consider two cases.
If $r \leq \gamma_{\nn/\dd}$, then $r \leq s$ trivially.
Otherwise, we have $r > \gamma_{\nn/\dd}$, so by (\ref{eq:smallerthan0}), we must have $f(r, t) \leq 0 \leq f(s, t)$.
However it is easy to check that 
\begin{equation}\label{eq:increasex}
\frac{\partial}{\partial x} f(x, y) = 2 \dd x y^2 + 2 \nn y > 0
\end{equation}
whenever $x, y \geq 0$, implying $r \leq s$ in this case as well.
\end{proof}

\section{Illustrative Examples}\label{sec:examples}

\newcommand{\aplus}{~\tilde{+}~}
\newcommand{\ttt}{\dd}
\newcommand{\class}[1]{[ #1] }

In this section we hope to give some intuition as to how one can view the 
$\Wtrans{\nn}{\uu}{}$ operator and (in particular) the role that the parameter 
$\uu$ plays.
We then give computational examples that show the relative accuracy of 
Theorem~\ref{thm:main}.
All plots and computations in this section were done using Mathematica 12.

\subsection{Singular values of rectangular matrices}

Before discussing polynomials directly, it will be informative to recall the 
connection with the singular values of matrices that was mentioned  
Section~\ref{sec:ffp}.
That is, let $A$ and $B$ be $(\dd + \kk) \times \dd$ matrices with 
\[
p_A(x) = \mydet{x \ident - AA^*}
\AND
p_B(x) = \mydet{x \ident - BB^*}.
\]
so that the roots of $\Strans p$ are
\[
\{ \pm \sigma_1(A), \dots, \pm \sigma_d(A) \}
\]
(and similar for $B$ and $q$).
One (important) consequence of Theorem~\ref{thm:rr} is that there exists a 
$(\dd + \kk) \times \dd$ matrix $C$ for which 
\[
[p_A \mysum{\dd}{\kk} p_B](x) = \mydet{x \ident - CC^*}.
\]
Thus if we define $\class{A}$ to be the class of matrices which have the same 
dimensions 
and same singular values as $A$, then the rectangular additive convolution can 
be viewed as a binary operation on these classes.
Coupled with the observation made in Section~\ref{sec:ffp} that we have the 
explicit 
formula
\begin{equation}\label{eq:conv}
[p_A \mysum{\dd}{\kk} p_B](x) = \int_{Q, R} \mydet{ x \ident - (A + Q B R)(A + 
Q B 
R)^*} dQ dR
\end{equation}
when $dQ$ and $dR$ are Haar-uniform random unitary matrices of the appropriate 
size, one could feel justified in writing this binary relation as
\[
\class{A} \aplus \class{B} = \class{C}.
\]

An obvious question, then, is to what extent $\class{A} \aplus \class{B}$ and 
$\class{A + B}$ 
have 
similar behavior.
In general, they cannot be the same because the singular values of $A + B$ will 
depend on the singular vectors of $A$ and $B$, whereas $\class{A} \aplus 
\class{B}$ is 
independent of the singular vectors.
However, if we were to interpret $\aplus$ as some sort of ``unitarily 
invariant'' way of ``adding'' two matrices together, then 
Theorem~\ref{thm:main} would be a statement about the behavior of the largest 
singular value under this operation.

The first relevant observation in this direction is that 
Theorem~\ref{thm:main} simplifies greatly when $\alpha = 0$, in that 
\[
\Theta^{\nn}_{0}(p) = \maxroot{\Strans p}.
\]
Hence in this case, Theorem~\ref{thm:main} reduces to 
\[
\maxroot{\Strans[p \mysum{\dd}{\kk} q]} \leq \maxroot{\Strans p} + \maxroot{ 
\Strans q}
\]
which, when written in the matrix context, becomes the inequality
\begin{equation}\label{eq:triangle}
\sigma_{\max}(\class{A} \aplus \class{B}) \leq \sigma_{\max}(\class{A}) + 
\sigma_{\max}(\class{B}).
\end{equation}
That is, $\aplus$ satisfies a triangle inequality similar to normal matrix 
addition (and in fact this triangle inequality can be derived from the normal 
one using \eqref{eq:conv}).

The shortcoming of \eqref{eq:triangle} is that it neglects much of the 
information 
we have concerning the singular values of the original matrices.
If we are to accept the interpretation that $\aplus$ is some sort of a 
``unitarily invariant'' version of addition, then we should suspect that the 
true answer will depend on all of the singular values of $A$ and $B$.
This is the case, and the purpose of $\Wtrans{\nn}{\uu}{}$ is to try 
to obtain better bounds by accessing this extra information.

\subsection{The role of $\Wtrans{\nn}{\uu}{}$}

For the purpose of illustration, let us consider the matrix classes $\class{A}$ 
and $\class{B}$ with: 
\[
\sigma_\ttt(\class{A}) = \sigma_{\ttt-1}(\class{A}) = \dots = 
\sigma_2(\class{A}) = 1
\AND
\sigma_1(\class{A}) = 2.
\]
versus 
\[
\sigma_\ttt(\class{B}) = 1
\AND
\sigma_{\ttt-1}(\class{B}) = \sigma_{\ttt-2}(\class{B}) = \dots = 
\sigma_1(\class{B}) = 2
\]
It should be clear that the triangle inequality \eqref{eq:triangle} would treat 
all of the sums involving $A$ and $B$ similarly:
\[
\sigma_{\max}(\class{A} \aplus \class{A}) \leq 4 
\AND
\sigma_{\max}(\class{A} \aplus \class{B}) \leq 4.
\AND
\sigma_{\max}(\class{B} \aplus \class{B}) \leq 4.
\]
However, the interpretation of $\aplus$ as a ``unitarily 
invariant'' version of addition suggests that $\sigma_{\max}(\class{A} \aplus 
\class{A})$ 
should be significantly larger than $\sigma_{\max}(\class{B} \aplus 
\class{B})$, since 
many of the 
rotations $Q, 
R$ in \eqref{eq:conv} will result in 
\[
\sigma_{\max}(A + Q A R) = 4
\AND
\sigma_{\max}(B + Q B R) = 3.
\]
The goal then is to improve this bound by taking into account the positions of 
the other singular values (of both matrices).
However, it is important for applications that we incorporate this information 
in a way that can be easily iterated, as we will often find ourselves wanting 
to apply knowledge we have gained about $\sigma_{\max}(\class{A} \aplus 
\class{B})$ to gain knowledge about $\sigma_{\max}(\class{A} \aplus \class{B} 
\aplus \class{C})$ (for example).
The upshot of Theorem~\ref{thm:main} is that we can accomplish this using the 
$\Wtrans{\nn}{\uu}{}$ transformation.

The intuition behind the action of $\Wtrans{\nn}{\uu}{}$ that we find the most 
illuminating is inspired by a model from \cite{nikhil_thesis}.
If one were to view the roots of $p(x)$ as particles, then the application of 
$\Wtrans{\nn}{\uu}{}$ can be seen as a gust of wind (with total strength 
parametrized by $\alpha$) pushing the particles forward.
In theory, one would expect any such wind to divide its force evenly between 
the particles (pushing them the same amount).
However the operator $\Wtrans{\nn}{\uu}{}$ has the property that the positive 
roots of $p$ and those of $\Wtrans{\nn}{\uu}{p}$ will always {\em 
interlace}\footnote{We will not prove this here, but it is a direct consequence 
of Theorem~\ref{thm:rr} --- see \cite{BB2}.}.
That is, if 
\[
s_1 \geq s_2 \geq \dots \geq s_\dd
\AND
s'_1 \geq s'_2 \geq \dots \geq s'_\dd
\]
are the roots of $p$ and $\Wtrans{\nn}{\uu}{p}$ (respectively), then one will 
always have the inequality
\[
s_{i-1} \geq s_{i}' \geq s_{i}.
\]
As a result, the force of the wind treats the particles like ``billiard balls'' 
with polynomials like $p_B$ (that have many roots close to the largest root) 
transferring much of the energy onto the largest root.
In comparison, $p_A$ will see a lot of its energy transferred onto its  
second-largest root, but as long as $\uu$ is not too big, there will not be 
much energy transferred on to the largest root.

Hence the distance that the operator $\Wtrans{\nn}{\uu}{}$ pushes the root of a 
polynomial, when viewed as a function of $\uu$, encodes different levels of 
information about the locations of the other roots.
Values of $\uu$ that are too large will result in most of the energy being 
transferred onto the largest root (independent of the initial configuration).
Similarly, small values of $\uu$ will result in very little of the energy being 
transferred on to the largest root (also independent of the initial 
configuration).
Intuitively, the optimal $\uu$ will be one that allows as much initial force as 
possible while still avoiding too much transfer onto the largest particle.

\subsection{Example computations}

Let use consider the examples from the previous section in the case where $A$ 
and $B$ are $6 \times 9$ matrices (so $d = 6, n = 3$). 
Hence
\[
p_A(x) = (x-1)^5(x-4)
\AND
p_B(x) = (x-1)(x-4)^5.
\]
and the resulting convolutions are (where we have replaced $\mysum{6}{3}$ with 
$\boxplus$ to aid readability):
\begin{align*}
[p_A \mysum{}{} p_A](x) 
&= 
x^6 - 18 x^5 + 120 x^4 - 380 x^3 + 600 x^2 - 442 x + \frac{2431}{21} \\
[p_A \mysum{}{} p_B](x) 
&= 
x^6 - 30 x^5 + 350 x^4 - \frac{6050}{3} x^3 + 5975 x^2 - 8450 x + 
\frac{90080}{21} \\
[p_B \mysum{}{} p_B](x) 
&= 
x^6 - 42 x^5 + \frac{2060}{3} x^4 - 5520 x^3 + 22600 x^2 - \frac{130624}{3} x + 
\frac{622336}{21}.
\end{align*}
Figure~\ref{fig:alphas} shows two 
functions of $\alpha$, plotted for each of the convolutions above: the true 
value of 
$\maxroot{\Wtrans{3}{\uu}{q}}$ (in yellow) plotted against
the bound given by Theorem~\ref{thm:main} (in blue). 
In particular, the blue curve in each case will have value $4$ when $\alpha = 
0$ (as dictated by the triangle inequality), while the value we are trying to 
bound will be the value of the yellow curve at $\alpha = 0$.
Unfortunately, we were forced to cut off the values at $\alpha = 0$ in the 
graphs due to precision issues, so we have listed the relevant values in 
Table~\ref{table}.

\begin{figure}[!h]
    \centering

    \begin{minipage}{0.25\textwidth}
        \centering
        \includegraphics[width=\linewidth, 
        height=0.15\textheight]{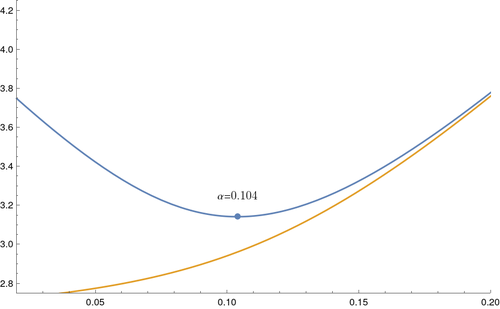}
$q = p_A \mysum{}{} p_A$
    \end{minipage}
\qquad
    \begin{minipage}{0.25\textwidth}
        \centering
        \includegraphics[width=\linewidth, 
        height=0.15\textheight]{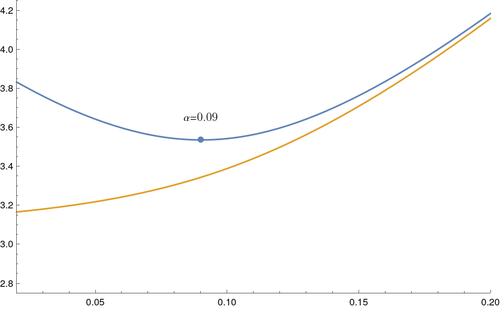}
$q = p_A \mysum{}{} p_B$

    \end{minipage}
\qquad
    \begin{minipage}{0.25\textwidth}
        \centering
        \includegraphics[width=\linewidth, 
        height=0.15\textheight]{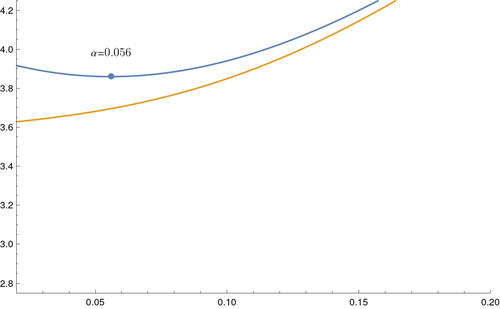}
$q = p_B \mysum{}{} p_B$
    \end{minipage}

\caption{Optimal values of $\alpha$ for three convolutions involving $p_A$ and 
$p_B$.
The blue curve shows the bound provided by 
Theorem~\ref{thm:main} as a function of $\alpha$. 
The yellow curve shows $\maxroot{\Wtrans{3}{\uu}{q}}$ for the convolution 
polynomial $q$.
}
\label{fig:alphas}
\end{figure}
\begin{table}[!h]
\def\arraystretch{1.4}
\[
\begin{array}{c|c|c|c|c}
\text{Convolution $q(x)$} &  \text{Best 
$\uu_0$ in Thm.~\ref{thm:main}} &  \text{Resulting bound} & 
\maxroot{q(x^2)}   & 
\text{\eqref{eq:triangle} bound} \\
\hline
p_A \mysum{}{} p_A &  0.104 & 3.143 & 2.702 & 4 \\
p_A \mysum{}{} p_B &  0.090 & 3.537 & 3.144 & 4 \\
p_B \mysum{}{} p_B &  0.056 & 3.861 & 3.606 & 4
\end{array}
\]
\caption{The best $\alpha$ when Theorem~\ref{thm:main} is 
applied to each convolution, the resulting bound, and the true maximum (the 
yellow curve in Figure~\ref{fig:alphas} at $\alpha = 0$).}
\label{table}
\end{table}

\section{Further Research}\label{sec:conclusion}

One possible direction of research comes from an observation in \cite{BG} concerning a degeneration in the rectangular convolution in the case $\nn = 0$ ($\lambda = 1$ in \cite{BG}) that allows one to compute singular value distributions of square matrices using the additive convolution.
In particular, if the asymptotic singular laws of two independent (left and right unitarily invariant) matrices converge to $\mu_1$ and $\mu_2$, then the asymptotic singular law of their sum is the unique probability measure on $[0, \infty)$ which, when symmetrized, is equal to the free convolution of the symmetrizations of $\mu_1$ and $\mu_2$.
In terms of polynomial convolutions, this would mean that 
\[
\Strans{[p \mysum{\dd}{0} q]} 
= [\Strans{p} \mysum{2\dd}{} \Strans{q}]
\]
which is not true (in general).
It is true that they share the same bounds; by Theorem~\ref{thm:symmetric},
\[
\maxroot{U_\alpha[\Strans{p} \mysum{2\dd}{} \Strans{q}]} \leq \maxroot{U_\alpha \Strans{p}} + \maxroot{U_\alpha \Strans{q}} - 2 \alpha \dd
\]
and by Theorem~\ref{thm:asymmetric},
\[
\maxroot{U_\alpha \Strans{[p \mysum{\dd}{0} q]}} \leq \maxroot{U_\alpha \Strans{p}} + \maxroot{U_\alpha \Strans{q}} - 2 \alpha \dd.
\]
One might then ask whether there is an inequality between the two, and we conjecture that, in fact, there is:
\begin{conjecture}\label{conj:2dvsd2}
For all $p, q$ in $\rrposle{d}$, we have
\[
\maxroot{U_\alpha \Strans{[p \mysum{\dd}{0} q]} ]} \leq \maxroot{U_\alpha[\Strans{p} \mysum{2\dd}{} \Strans{q}]}.
\]
\end{conjecture}

It is not hard to show that Conjecture~\ref{conj:2dvsd2} is true for basic 
polynomials in $\rrpos{d}$.
Recall from Corollary~\ref{cor:rectToGeg} that for basic polynomials $p(x) = (x 
- \lambda)^d$ and $q(x) = (x-\mu)^d$ that
\[
\Strans{[p \mysum{d}{0} q]}(y) = (\lambda \mu)^{d/2} C_{d}^{1} \left(\frac{y^2 - (\lambda +\mu)}{2 \sqrt{\lambda \mu }} \right).
\]
Using a similar method, one can show that for these polynomials
\[
[\Strans{p} \mysum{2d}{} \Strans{q}](y) = \frac{(16\lambda \mu)^{d/2}}{\binom{2d}{d}} C_{d}^{1/2} \left(\frac{y^2 - (\lambda +\mu)}{2 \sqrt{\lambda \mu }} \right).
\]
As was noted in Remark~\ref{rem:gammatheta}, the positive roots of 
$C_{d}^{\alpha}(x)$ are decreasing with $\alpha$, which will result in the 
roots of $[\Strans{p} \mysum{2d}{} \Strans{q}]$ majorizing the roots of 
$\Strans{[p \mysum{d}{0} q]}(y)$.
Since the function $1/(x-t)$ is convex for $x > t$, the Cauchy transform is 
Schur convex on that range, and this implies Conjecture~\ref{conj:2dvsd2} (for 
this particular $p$ and $q$).
In particular, it may be possible to prove Conjecture~\ref{conj:2dvsd2} using an inductive method like the one developed in Section~\ref{sec:inductions}.

Later versions of \cite{convolutions4} use a similar argument on basic 
polynomials to prove the ``base case'' for Theorem~\ref{thm:asymmetric} from 
Theorem~\ref{thm:symmetric} and, if true in general, 
Conjecture~\ref{conj:2dvsd2} would imply Theorem~\ref{thm:asymmetric} 
directly from Theorem~\ref{thm:symmetric}.
It would be interesting to know whether Theorem~\ref{thm:main} can be proved as 
a corollary of  Theorem~\ref{thm:symmetric}, as this would imply new identities 
in free probability.
However, this seems unlikely; as was noted in Section~\ref{sec:previous}, the only reason Theorem~\ref{thm:asymmetric} can be stated using the $U_\alpha$ operator is due to a degeneration at $n = 0$ that does not hold for any other $n$ and there is no reason to believe that $U_\alpha$ and $\Theta_\alpha^n$ can be compared in an asymptotically tight way for general $n$. 

Lastly, it would be interesting to determine whether Theorem~\ref{thm:main} is a special case of some submodularity inequality similar to Theorem~\ref{thm:symmetric} and Theorem~\ref{thm:submodular}.

\section*{Appendix}\label{sec:appendix}

\newcommand{\mtn}{\mu_\theta^\nn}

In this appendix, we give a brief sketch of the computation leading from 
Theorem~\ref{thm:asymptgegen} to Corollary~\ref{cor:asymptcauchygegen}.
We have seen such a calculation referred to as ``standard'' in various places 
in the literature, but wanted to give some indication as to how the proof goes 
for those who, like the authors, might be less familiar with such computations.

\begin{lemma}\label{lem:asymptcauchygegen}
For $\theta > 0$ and $\nn > 0$, let $\mtn$ be a sequence of compact distributions 
for which 
\[
\mtn(y) \xrightarrow{\nn \to \infty} \mu_\theta(y) = \frac{1 + \theta}{\pi} 
\frac{\sqrt{\gamma_\theta^2 - y^2}}{1-y^2} \one_{[-\gamma_\theta, 
\gamma_\theta]}(y)
\]
where $\gamma_\theta = \frac{\sqrt{2 \theta + 1}} {\theta+1}$.
Then for all $x > \gamma_\theta$, we have
\[
\cauchy{\mtn}{y} \xrightarrow{\nn \to \infty}
\cauchy{\mu_\theta}{x} =\frac{-\theta x+(1+\theta)\sqrt{x^2-\gamma_\theta^2}}{(x^2-1)}. 
\]
\end{lemma}

\begin{proof}
Since all of the distributions are compact, we can interchange the limit with the integral defining the Cauchy transform.
That is, 
\[
\lim_{\nn \to \infty} \cauchy{\mtn}{x}
= \int \lim_{\nn \to \infty} \frac{\mtn(z)} {x-z}\d{z}
=\frac{1+\theta}{\pi} \int_{-\gamma_\theta}^{\gamma_\theta} 
\frac{\sqrt{\gamma_\theta^2-z^2}}{(x-z)(1-z^2)}~\d{z}.
\]
We then make a change of variable from $z$ to $u$, using the Euler substitution: 
\[
z
=\frac{-2\gamma_\theta u}{1+u^2}
\qquad \text{so that} \qquad 
\d{z} = 2 \gamma_\theta \frac{u^2-1}{(u^2+1)^2}~\d{u},
\]
yielding
\begin{equation}\label{eq:substituted}
\lim_{\nn \to \infty} \cauchy{\mtn}{x}
=\frac{1+\theta}{\pi}\int_{-1}^{1} \frac{ 2\gamma_\theta^2 
(1-u^2)^2}{(xu^2+2\gamma_\theta u +x)\big( 
(1+u^2)^2-4\gamma_\theta^2u^2\big)}~\d{u}.
\end{equation}
We can then rewrite the integrand using partial fractions:
\[
\frac{ 2\gamma_\theta^2 (1-u^2)^2}{(xu^2+2\gamma_\theta u +x)
\big( (1+u^2)^2-4\gamma_\theta^2u^2 \big)}
= 2\frac{x^2-\gamma_\theta^2}{(x^2-1)x} g(\frac{1}{x}) 
+\frac{\gamma_\theta^2-1}{(x+1)} g(-1)
+ \frac{\gamma_\theta^2-1}{(x-1)} g(1)
\]
where we have defined
\[
g(s) 
:= \frac{1}{u^2 + 2 s u \gamma_\theta +1}
=
\frac{1}{(u - s \gamma_\theta)^2 + 1 - s^2 \gamma_\theta^2}.
\]
For $|s \gamma_\theta| < 1$, these integrals can be computed explicitly using the trigonometric substitution $u =  \tan(\theta) \sqrt{1 - \gamma_{\theta}^2 s^2}$.
This gives 
\[
\int_{-1}^1 g(s)~\d{u} 
= \frac{1}{\sqrt{1 - \gamma_\theta^2s^2}} \left( \arctan\left( \sqrt{\frac{1 - \gamma_\theta s}{1 + \gamma_\theta s}} \right) + \arctan\left( \sqrt{\frac{1 + \gamma_\theta s}{1 - \gamma_\theta s}} \right) \right) 
= \frac{\pi}{2\sqrt{1 - \gamma_\theta^2s^2}}
\]
since $\arctan(z) + \arctan(1/z) = \pi/2$ for all $z$.
The result follows by plugging these into (\ref{eq:substituted}) and simplifying.
\end{proof}

\end{document}